\setlist[enumerate,1]{wide, label=\emph{(\roman*)}, labelindent=0pt, itemsep=.05in}
\newcommand{\cre}{\color{red}}
\definecolor{dg}{rgb}{0, 0.5, 0}
\definecolor{dp}{rgb}{0.50, 0, 0.40}
\newcommand{\E}{\mathbb E}
\newcommand{\R}{\mathbb R}
\newcommand{\N}{\mathbb N}
\newcommand{\PP}{\mathbb P}
\newcommand{\Z}{\mathbb Z}
\newcommand{\al}{\alpha}
\newcommand{\ep}{\varepsilon}
\newcommand{\si}{\sigma}
\newtheorem{theorem}{Theorem}[section]
\newtheorem{definition}[theorem]{Definition}
\newtheorem{lemma}[theorem]{Lemma}
\newtheorem{proposition}[theorem]{Proposition}
\theoremstyle{remark}
\newtheorem{remark}[theorem]{Remark}
\theoremstyle{remark}
\newtheorem{example}[theorem]{Example}
\newcommand{\bean}{\begin{eqnarray*}}
\newcommand{\eean}{\end{eqnarray*}}
\newcommand{\ben}{\begin{enumerate}}
\newcommand{\een}{\end{enumerate}}
\newcommand{\beq}{\begin{equation}}
\newcommand{\eeq}{\end{equation}}
\begin{document}

\title[Fourier dimension of fractional Brownian graphs]
{On the Fourier dimension\\ of fractional Brownian graphs}

\author[C. Y. Lee \and S. Tindel]
{Cheuk Yin Lee \and Samy Tindel}

\newcommand{\Addresses}{{
  \bigskip
  \footnotesize

 \noindent
  \textsc{C. Y. Lee}: School of Science and Engineering, The Chinese University of Hong Kong (Shenzhen), Longgang, Shenzhen, Guangdong, 518172, P.R. China\\
   \textit{E-mail address}: \texttt{leecheukyin@cuhk.edu.cn}

  \medskip

  \noindent
 \textsc{S.~Tindel}: Department of Mathematics,
Purdue University, West Lafayette, IN 47907, USA.\\
  \textit{E-mail address}: \texttt{stindel@purdue.edu}

}}

\maketitle

\begin{abstract}
In this note we prove that the Fourier dimension of the graph $G(B)$ of a fractional Brownian motion $B$ with Hurst parameter $H\in(0,1/2)$ is equal to 1. This finishes to solve a conjecture by Fraser and Sahlsten~\cite{FS18}. It also yields an exact formula for the gap $\dim_{\rm H}(G(B)) - \dim_{\rm F}(G(B))$ between the Hausdorff dimension and the Fourier dimension of $G(B)$. The proof is based on an intricate combinatorics procedure for multiple integrals related to the covariance function of the fractional Brownian motion.
\end{abstract}

\tableofcontents

\section{Introduction}

Let $A \subset \R^n$ be a Borel set and $\mathcal{P}(A)$ denote the set of all Borel probability measures on $\R^n$ with $\mu(A) = 1$.
For any $\mu \in \mathcal{P}(A)$, the Fourier transform of $\mu$ is defined by
\begin{align*}
	\hat{\mu}(\xi) = \int_{\R^n} e^{-2\pi i \xi \cdot x} d\mu(x), \quad \xi \in \R^n.
\end{align*}
The Fourier dimension of $A$ is then defined by
\begin{equation}\label{eq:def-fourier-dim}
	\dim_{\rm F} A = \sup\left\{ \beta \in [0, n] : \exists\,\mu \in \mathcal{P}(A), |\hat{\mu}(\xi)| \lesssim |\xi|^{-\beta/2} \right\},
\end{equation}
where $f(x) \lesssim g(x)$ means that there exists a constant $C<\infty$ such that $f(x) \le C g(x)$ for all $x$.
The relevance of this definition is ensured by a number of geometric properties. Among those, let us mention the following:

\begin{enumerate}[label=\textbf{(\roman*)}]
    \item The Fourier dimension provides a lower bound of the Hausdorff dimension. Specifically, for a measurable set $A \subset \mathbb{R}^n$ we have
    \begin{equation}\label{a1}
        \dim_{\rm F} A \le \dim_{\rm H} A.
    \end{equation}

    \item For manifolds in $\mathbb{R}^n$ with non-vanishing curvature, inequality~\eqref{a1} becomes in fact an equality.

    \item The notion of Fourier dimension is especially relevant for stochastic processes, since they can be easily related to local time properties.
\end{enumerate}
The concept of Fourier dimension also encodes a number of challenging arithmetic properties. We refer to~\cite{LL25} for more details about these aspects of the theory.

With those motivations in mind, observe that one of the key questions for a random set $A$ is to know if its Hausdorff dimension coincides with its Fourier dimension. A set with such a property is called a \emph{Salem set}, a name coined by Kahane in \cite{K93}. Whenever $A$ is not a Salem set, a natural question is then to quantify the difference
$\dim_{\rm H}(A) - \dim_{\rm F}(A)$.
There has been a recent burst of studies carrying out this type of program for the most common (mostly Gaussian) processes. Among the classes of sets which have been considered, let us mention the following:
\begin{enumerate}[label=\textbf{(\arabic*)}]
    \item Images of compact sets $A \subset [0,1]$ by a process $X : [0,1] \to \mathbb{R}^n$. For a one-dimensional fractional Brownian motion $B^H$ with parameter $H \in (0,1)$, one gets that $B^H(A)$ is a Salem set with
    \begin{equation*}
        \dim_{\rm F}\big(B^H(A)\big) = \min\left\{1, H^{-1}\dim_H(A)\right\}.
    \end{equation*}
    This result was established in \cite{K85}, with further generalizations to Gaussian fields in \cite{SX06}.
    
    \item The level sets of a fractional Brownian motion were considered in \cite{Mu}. Specifically, it is proved therein that the zero set of a fractional Brownian motion is almost surely a Salem set.
    
    \item The graph of a function $f : [0,1] \to \mathbb{R}$ is defined by
    \begin{equation}\label{a2}
        G(f) = \left\{ (t, f_{t}) ; \,  t \in [0,1] \right\}.
    \end{equation}
    For most natural stochastic processes, this type of set is not Salem. Indeed, a general result in \cite{FOS14}  asserts that for any continuous function $f$ one has
    \begin{equation}\label{a3}
        \dim_{\rm F} G(f) \le 1 \, ,
    \end{equation}
    while for a fractional Brownian motion, the article \cite{A77} states that $\dim_{\rm H}G(B) = 2-H > 1$.
\end{enumerate}

Due to the phenomenon mentioned above, graphs of stochastic processes have attracted a lot of attention in recent years. More specifically, in order to quantify the difference $\dim_{\rm H}(G(B)) - \dim_{\rm F}(G(B))$ for a generic process $B$ and recalling relation~\eqref{a2}, one wishes to prove that $\dim_{\rm F}(G(B))=1$ for most cases of interest. Relevant results in this direction include
Fraser and Sahlsten~\cite{FS18}, who proved that the Fourier dimension of the graph of the standard Brownian motion is almost surely equal to 1, and conjectured that the Fourier dimension of the graph of fractional Brownian motion is also equal to 1. Then
Lai and Lee~\cite{LL25} confirmed part of this conjecture by proving that the Fourier dimension of the graph of fractional Brownian motion with $H \in [1/2, 1)$ is equal to 1.
Whether the same holds for $H\in (0,1/2)$ remained an open problem; see \cite[Open Problem 1.3]{LL25}.
The main result of this paper fully confirms this conjecture:

\begin{theorem}\label{th:main}
The Fourier dimension of the graph of a fractional Brownian motion with Hurst parameter $H\in(0,1)$ is almost surely 1.
\end{theorem}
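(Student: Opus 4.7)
The upper bound $\dim_{\rm F} G(B) \le 1$ is automatic from~\eqref{a3}, and for $H \in [1/2,1)$ the lower bound is the main result of~\cite{LL25}; the work thus concentrates on $H \in (0,1/2)$. The natural candidate is the occupation measure $\mu$ on $G(B)$ obtained as the pushforward of Lebesgue measure on $[0,1]$ under $t \mapsto (t,B_t)$, whose Fourier transform is
\begin{equation*}
  \hat\mu(\xi_1,\xi_2) \;=\; \int_0^1 e^{-2\pi i (\xi_1 t + \xi_2 B_t)}\, dt, \qquad \xi=(\xi_1,\xi_2)\in\R^2.
\end{equation*}
The plan is to prove the almost sure bound $|\hat\mu(\xi)| \lesssim |\xi|^{-1/2+\epsilon}$ for every $\epsilon>0$. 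As in~\cite{FS18,LL25}, this pointwise statement follows from a high moment estimate
\begin{equation*}
  \E |\hat\mu(\xi)|^{2N} \;\le\; C_{N,\epsilon}\,|\xi|^{-N+\epsilon N}, \qquad |\xi|\ge 1,
\end{equation*}
for every integer $N\ge 1$ and every $\epsilon>0$: Markov's inequality on a dyadic net of frequencies, a routine derivative bound on $\hat\mu$ to fill in the gaps between net points, and a Borel--Cantelli step do the rest.

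Integrating out the Gaussian randomness gives the explicit representation
\begin{equation*}
  \E |\hat\mu(\xi)|^{2N} \;=\; \int_{[0,1]^{2N}} e^{-2\pi i \xi_1 \sum_{j=1}^{2N}(-1)^{j-1} t_j}\, \exp\!\big(-2\pi^2 \xi_2^2\, Q(t)\big)\,dt,
\end{equation*}
with quadratic form $Q(t) = \operatorname{Var}\bigl(\sum_j (-1)^{j-1} B_{t_j}\bigr)$. The key analytic input I would use is local nondeterminism of FBM: on the simplex where $t_{\sigma(1)}<\cdots<t_{\sigma(2N)}$, setting $u_k = t_{\sigma(k)}-t_{\sigma(k-1)}$, one has a lower bound $Q(t) \ge c \sum_k \alpha_k(\sigma)\, u_k^{2H}$ with nonnegative integer coefficients $\alpha_k(\sigma)$ determined by the sign pattern that $\sigma$ induces on the alternating sequence $(+,-,+,-,\dots)$. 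Decomposing $[0,1]^{2N}$ into the $(2N)!$ simplices and integrating the Gaussian factor variable by variable yields a clean bound in $|\xi_2|$ on those simplices for which every $\alpha_k(\sigma)$ is strictly positive.

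The central obstacle, and the feature which distinguishes $H<1/2$ from the range handled in~\cite{LL25}, is that many orderings $\sigma$ produce vanishing coefficients $\alpha_k(\sigma)=0$, corresponding to directions in which the Gaussian factor provides no damping. On those simplices the required decay must come from the oscillatory factor $e^{-2\pi i \xi_1 \sum (-1)^{j-1} t_j}$, which in the new variables reads $e^{-2\pi i \xi_1 \sum_k \beta_k(\sigma)\, u_k}$ for explicit integer coefficients $\beta_k(\sigma)$. The combinatorial procedure I would develop classifies the permutations $\sigma$ by the matching structure of their induced signs, groups simplices of the same degeneracy rank together, and proceeds by induction on this rank: each integration by parts in a degenerate direction $u_k$ with $\alpha_k(\sigma)=0$ and $\beta_k(\sigma)\ne 0$ produces an extra $|\xi_1|^{-1}$ factor together with a boundary term of strictly lower rank. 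The hardest step will be to control these boundary terms, which can a priori blow up when two of the $t_j$'s coalesce; the combinatorics must pair them against the Gaussian damping coming from the non-degenerate variables, while keeping the constants $C_{N,\epsilon}$ growing only factorially in $N$. Once this is in place, balancing the two regimes $|\xi_1| \lesssim |\xi_2|^{2H}$ versus $|\xi_1| \gtrsim |\xi_2|^{2H}$ produces the required uniform bound $|\xi|^{-N+\epsilon N}$ and completes the proof.
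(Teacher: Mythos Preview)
Your proposal has a genuine gap at its central step. In the increment variables $u_k = t_{\sigma(k)}-t_{\sigma(k-1)}$, writing $\varepsilon_k=(-1)^{\sigma^{-1}(k)-1}$, the phase becomes $\sum_k \beta_k u_k$ with $\beta_k = \sum_{j\ge k}\varepsilon_j$, while local nondeterminism gives $Q(t)\ge c\sum_k \beta_k^2\,u_k^{2H}$. In other words $\alpha_k(\sigma)=\beta_k(\sigma)^2$: the degenerate directions in which the Gaussian factor provides no damping are \emph{exactly} the directions in which the oscillatory factor has zero frequency. There is no direction with $\alpha_k=0$ and $\beta_k\ne 0$, so your proposed integration by parts ``in a degenerate direction $u_k$ with $\alpha_k(\sigma)=0$ and $\beta_k(\sigma)\ne 0$'' is vacuous, and the inductive scheme built on it cannot start.

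The paper circumvents this by integrating by parts in the \emph{original} simplex variables rather than in the increments, and specifically with respect to the odd-indexed ones $u_1,u_3,\dots,u_{2q-1}$, whose phase coefficients $\varepsilon_{2j-1}=\pm1$ are never zero; this systematically extracts $q$ powers of $|\lambda|^{-1}$ via the combinatorial formula of~\cite[Proposition~3.4]{LL25}. The price is that differentiating $G_\varepsilon$ produces genuine singularities of order $(s_i-s_{i\pm1})^{2H-1}$ and $(s_i-s_j)^{2H-2}$, which for $H<1/2$ are non-integrable on the diagonal. The heart of the paper (Lemma~\ref{lem:I:UB}) is an iterative integration that exploits two structural facts---consecutive differentiation indices satisfy $p_{i+1}-p_i\ge 2$ (Lemma~\ref{lem:p_i}), and two consecutive partial sums cannot both vanish---to ensure that every such singular factor is either paired with a Gaussian damping term or has its diagonal avoided by an intermediate non-differentiation variable. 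This is the combinatorics you would actually need; the degeneracy-rank induction you sketch does not engage with it.
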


The proof of Theorem \ref{th:main} relies on an intricate integration by parts procedure involving combinatorics tools.
The strategy employed in our proof is detailed in Section \ref{s:reduction}.
However, one can already highlight a few key points at this stage:
\begin{enumerate}[label=\textbf{(\roman*)}]
\item A proper study of the Fourier dimension of the graph measure can be reduced to an evaluation of the moments of the following quantity (see \eqref{b1} for more context):
\begin{align}\label{hat:mu}
	\hat{\mu}_G(\xi) = \int_0^1 e^{-2\pi i (\xi_1 t + \xi_2 B_t)} dt
	\quad \text{for $\xi = (\xi_1,\xi_2) \in \R^2$.}
\end{align}
The goal is to obtain a precise rate of decay for the moments. In our case we will discover (see~\eqref{E:horizontal} below) that
$\E[ |\hat{\mu}_G(\xi_1,\xi_2)|^{2q} ] \lesssim |\xi_1|^{-q}$.
\item Some scaling properties and moment computations for the fractional Brownian motion $B$ reduce our estimates to the following problem: consider $\varepsilon \in \{-1,1\}^{2q}$, $T>0$ and $\lambda \in \R$.
We wish to extract powers of $T$ and $\lambda$ from an integral $\mathcal{I}^\lambda_T[\varepsilon,G_\varepsilon]$ over the simplex $\Delta_T = \{0<u_1<\cdots<u_{2q}<T\}$:
\begin{align}\label{I1}
	\mathcal{I}^\lambda_T[\varepsilon,G_\varepsilon] = \int_{\Delta_T} e^{-2\pi i\lambda \langle \varepsilon, u\rangle} e^{-\pi \mathrm{Var}(\sum_{i=1}^{2q} \varepsilon_i B_{u_i})} du.
\end{align}
\item Extracting powers from \eqref{I1} will be our main endeavor. It will be achieved in the lengthy and technical Lemma \ref{lem:I:UB}.
Roughly speaking, one starts from \eqref{I1} and performs integration by parts in order to extract negative powers of $\lambda$. Then one has to carefully choose the integrations by parts in order to sort out two problems: (a) The combinatorics related to the variables $\varepsilon_i \in \{-1,1\}$; and (b) The singularities created by the differentiation of $u \mapsto \mathrm{Var}(\sum_{i=1}^{2q} \varepsilon_i B_{u_i})$. Those singularities are particularly challenging in our current case of study $H \in (0,1/2)$.
\end{enumerate}

\noindent
The remainder of the paper details the strategy laid out above, with a particular emphasis on integration by parts.

\section{Preliminary results}

This section is devoted to recalling some basic facts about Fourier dimension computations. They are mostly borrowed from~\cite{LL25} and serve as a basis for our computations in the rough case of a fractional Brownian motion with Hurst parameter $H<1/2$.

\subsection{Reduction to variance estimates}\label{s:reduction}

The processes considered in this paper are generically denoted by $B = \{ B_t \,, t \ge 0\}$.
Those will be fractional Brownian motions with Hurst parameter $H \in (0,1/2)$. 
Let us recall that $B$ is thus a centered Gaussian process defined on a complete probability space $(\Omega, \mathscr{F}, \PP)$, with covariance function
\begin{align}\label{fBm:cov}
	\E[B_s B_t] = \frac12 (|s|^{2H} + |t|^{2H} - |t-s|^{2H})\, , \qquad \text{for $s,t \ge 0$.}
\end{align}
For this process we consider the graph measure on $\R^2$, which is defined by
\begin{align*}
	\mu_G(A) = \int_0^1 {\bf 1}_{\{t \in [0,1]: (t, B_t) \in A\}} dt \, ,
	\qquad \text{for every Borel set $A \subset \R^2$.}
\end{align*}
Note that one can also write
\begin{align}\label{mu_G}
	\mu_G(A) = \iint_A {\bf 1}_{[0,1]}(t) \, \delta_x(B_t)\, dx\, dt,
\end{align}
so that formally $\mu_G$ can be recast as
\begin{align*}
	\mu_G(dt, dx) = {\bf 1}_{[0,1]}(t) \, \delta_x(B_t) \, dt\, dx.
\end{align*}
With this relation in hand, introduce the Fourier transform of $\mu_G$ as
\begin{equation}\label{b1}
	\hat{\mu}_G(\xi) 
	= \int_0^1 \int_\R e^{-2\pi i (\xi_1 t + \xi_2 x)}\mu_G(dt, dx)
	= \int_0^1 e^{-2\pi i (\xi_1 t + \xi_2 B_t)} dt.
\end{equation}
In the sequel we will also consider the image measure of the process $B$.
We recall its definition below.

\begin{definition}
Let $B$ be a fractional Brownian motion, with covariance function given by~\eqref{fBm:cov}.
Recall that the graph measure $\mu_G$ of $B$ is given by \eqref{mu_G}.
Then the image measure of $B$ is defined on Borel sets $F \subset \R$ by
\begin{align}\label{b2}
	\nu(F) = \mu_G([0,1] \times F) = \int_0^1 {\bf 1}_{\{B_t \in F\}} \, dt.
\end{align}
Its Fourier transform $\hat{\nu}$ is given for $\xi_2 \in \R$ by
\begin{align*}
	\hat{\nu}(\xi_2) = \int_0^1 e^{-2\pi i \xi_2 B_t} dt.
\end{align*}
\end{definition}


A standard way of finding Fourier dimension of the image of a stochastic process is through computing moments of the image measure $\nu$.
This method was first used by Kahane \cite{K85}.
General statements for this approach can be found in \cite{DL23, E16, F24}.
In order to prove Theorem \ref{th:main}, we will use the following specific version in Lai and Lee \cite{LL25} that is adapted to our current setting for getting lower bounds on the Fourier dimensions of graphs.

\begin{proposition}\label{pr:Fdim}\cite[Proposition 2.3]{LL25}.
For a generic process $B$, recall that the graph and image measures $\mu_{G}$ and $\nu$ are respectively defined by~\eqref{mu_G} and~\eqref{b2}.
Suppose for every $q \in \N$, there exists $C>0$ such that
\begin{enumerate}
\item (vertical bound) $\E[|\hat{\nu}(\xi_2)|^{2q}]\le C |\xi_2|^{-\gamma_2 q}$ for all $\xi_2 \ne 0$; and
\item (horizontal bound) $\E[|\hat{\mu}_G(\xi_1,\xi_2)|^{2q}] \le C |\xi_1|^{-\gamma_1 q}$ for all $\xi_1 \ne 0$ and $\xi_2 \in \R$.
\end{enumerate}
Then $\dim_{\rm F} G(B) \ge \min\{\gamma_1,\gamma_2\}$ a.s.
\end{proposition}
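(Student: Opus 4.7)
The plan is to show that the graph measure $\mu_G$ itself realizes the Fourier decay requested by \eqref{eq:def-fourier-dim}, namely that
$$|\hat{\mu}_G(\xi)| \le C_\beta(\omega)(1+|\xi|)^{-\beta/2} \quad \text{for every } \beta < \min\{\gamma_1,\gamma_2\}$$
almost surely. Since $B$ has continuous sample paths, $\mu_G$ is a Borel probability measure supported on $G(B)$, and this bound directly yields $\dim_{\rm F} G(B) \ge \min\{\gamma_1,\gamma_2\}$ a.s.

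The first step is to merge the two hypotheses into a single isotropic estimate on $\E[|\hat{\mu}_G(\xi)|^{2q}]$. Expanding the $2q$-th moment from \eqref{b1} with $\varepsilon = (\underbrace{1,\ldots,1}_q,\underbrace{-1,\ldots,-1}_q) \in \{-1,1\}^{2q}$ and using the Gaussian characteristic function of $\langle \varepsilon, B_u\rangle$ yields
$$\E\bigl[|\hat{\mu}_G(\xi_1,\xi_2)|^{2q}\bigr] = \int_{[0,1]^{2q}} e^{-2\pi i \xi_1 \langle \varepsilon,u\rangle}\, e^{-2\pi^2 \xi_2^2\, \mathrm{Var}(\langle \varepsilon, B_u\rangle)}\, du.$$
Bounding the unimodular oscillatory factor by $1$ recovers $\E[|\hat{\nu}(\xi_2)|^{2q}]$, so hypothesis (1) upgrades to
$$\E\bigl[|\hat{\mu}_G(\xi_1,\xi_2)|^{2q}\bigr] \le C|\xi_2|^{-\gamma_2 q}$$
\emph{uniformly in $\xi_1$}. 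Combining this with the horizontal bound (2) on the two cones $\{|\xi_1| \ge |\xi_2|\}$ and $\{|\xi_2| \ge |\xi_1|\}$ gives $\E[|\hat{\mu}_G(\xi)|^{2q}] \le C|\xi|^{-\gamma q}$ with $\gamma := \min\{\gamma_1,\gamma_2\}$.

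Given $\beta < \gamma$, I would choose $q$ so large that $q(\gamma-\beta)$ exceeds any prescribed threshold. Chebyshev's inequality then gives $\PP(|\hat{\mu}_G(\xi)| > |\xi|^{-\beta/2}) \le C|\xi|^{-q(\gamma-\beta)}$. A standard dyadic scheme now applies: at scale $N=2^k$, cover the annulus $\{N \le |\xi| \le 2N\}$ with a lattice of mesh $N^{-m}$ for fixed $m > \beta/2$, and take $q$ large enough that $q(\gamma-\beta) > 2m+2$ so that the union bound is summable over $k$. Borel--Cantelli produces a random threshold $R(\omega)$ beyond which the decay bound holds on the lattice. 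To pass from lattice to continuum, the elementary Lipschitz estimate
$$|\hat{\mu}_G(\xi) - \hat{\mu}_G(\xi')| \le 2\pi\bigl(1 + \sup_{t \in [0,1]}|B_t|\bigr)|\xi - \xi'|,$$
immediate from \eqref{b1} and the continuity of $B$, shows that the mesh correction is negligible compared to $|\xi|^{-\beta/2}$ since $\sup_t |B_t| < \infty$ almost surely. Letting $\beta \nearrow \gamma$ along a countable sequence finishes the argument.

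The only mildly delicate point is the lattice-to-continuum passage, since the Lipschitz constant is a random quantity. The a.s.\ finiteness of $\sup_{[0,1]} |B|$ is however sufficient: one simply picks a polynomial mesh and absorbs this random factor into $C_\beta(\omega)$. Everything else is a routine combination of Chebyshev's inequality and Borel--Cantelli, and the whole argument relies only on the hypotheses (1)--(2) plus the continuity of $B$, so it applies verbatim to our generic process.
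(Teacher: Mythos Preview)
The paper does not give its own proof of this proposition; it is quoted verbatim from \cite[Proposition~2.3]{LL25} and used as a black box. So there is nothing to compare your argument against in this paper.

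Your argument is the standard Kahane-type moment scheme and is correct in the setting of the paper (fractional Brownian motion). One point deserves a flag, however. Your reduction ``bounding the unimodular oscillatory factor by $1$ recovers $\E[|\hat\nu(\xi_2)|^{2q}]$'' passes from
\[
\E\bigl[|\hat\mu_G(\xi)|^{2q}\bigr]=\int_{[0,1]^{2q}} e^{-2\pi i\xi_1\langle\varepsilon,u\rangle}\,\E\bigl[e^{-2\pi i\xi_2\langle\varepsilon,B_u\rangle}\bigr]\,du
\]
to $\int_{[0,1]^{2q}}\E[e^{-2\pi i\xi_2\langle\varepsilon,B_u\rangle}]\,du=\E[|\hat\nu(\xi_2)|^{2q}]$. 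This step is valid only because the integrand $\E[e^{-2\pi i\xi_2\langle\varepsilon,B_u\rangle}]$ is \emph{nonnegative}; for a Gaussian process it equals $e^{-2\pi^2\xi_2^2\mathrm{Var}(\langle\varepsilon,B_u\rangle)}\ge 0$, and you explicitly invoke this. For a truly arbitrary process the triangle inequality would only give $\E[|\hat\mu_G|^{2q}]\le\int|\E[\cdots]|\,du$, and the right-hand side can exceed $\E[|\hat\nu|^{2q}]$. Since the proposition is stated for a ``generic process $B$'', you should either restrict to Gaussian $B$ (which is all the paper needs) or note that this is where the argument uses more than continuity. The remainder of your proof---the cone splitting, Chebyshev plus dyadic Borel--Cantelli, and the Lipschitz passage to the continuum---is routine and correct.
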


Within the landmark of Proposition \ref{pr:Fdim}, we thus wish to establish vertical and horizontal bounds for the fractional Brownian motion.
A first remark in this direction is that the vertical bounds are already known (see \eqref{E:vertical} below).
Therefore, we will concentrate our efforts on the horizontal bounds in Proposition \ref{pr:Fdim}.
To this end, we will follow the combinatorial framework in Lai and Lee \cite{LL25}.
Let us first recall some notation and tools introduced in \cite{LL25}, starting from the definition of simplex:
for $q \in \N_+$ and $T>0$, we consider the simplex $\Delta_T$  in $\R^{2q}$ as
\begin{equation}\label{b3}
	\Delta_T = \{ u = (u_1,\dots, u_{2q}) \in \R^{2q} : 0<u_1<\cdots<u_{2q}<T \}.
\end{equation}
We will also use sets of Rademacher type variables given as follows:
\begin{align}\label{A}
	\mathcal{A}_{2q} = \Big\{ \varepsilon=(\varepsilon_1, \dots, \varepsilon_{2q}) \in \{-1,1\}^{2q} : \sum_{j=1}^{2q} \varepsilon_j = 0 \Big\}.
\end{align}
Then for any $\varepsilon \in \mathcal{A}_{2q}$, $\lambda \in \R$, and any integrable function $G$ on $\Delta_T$, define
\begin{align}\label{I}
	\mathcal{I}[\varepsilon, G] = \mathcal{I}^\lambda_T[\varepsilon,G] := \int_{\Delta_{T}} e^{-2\pi i\lambda \langle\varepsilon, u \rangle} G(u) du.
\end{align}
The following formula for the even moments of $\hat{\mu}_G$ has been obtained in \cite{LL25} using Kahane's decomposition and the scaling property of fractional Brownian motion.

\begin{lemma}\cite[Lemma 5.1]{LL25}.
Let $B$ be a fractional Brownian motion with Hurst parameter $H \in (0,1)$, and recall that the Fourier transform of $\mu_{G}$ is given by~\eqref{b1}. Then for any $q\in \N_+$ and $\xi_1, \xi_2 \in \R \setminus\{0\}$, we have
\begin{align}\label{2q:moment}
	\E\left[ |\hat{\mu}_G(\xi_1,\xi_2)|^{2q} \right]
	= \frac{(q!)^2}{|\xi_2|^{2q/H}} \sum_{\varepsilon\in \mathcal{A}_{2q}} \mathcal{I}_{T}^{\lambda}[\varepsilon, G_\varepsilon],
\end{align}
where the parameters $\lambda, T$ and the function $G_{\varepsilon}$ are chosen as
\begin{align}\label{lambda,T}
	\lambda = \frac{\xi_1}{|\xi_2|^{1/H}}, \qquad
	T = |\xi_2|^{1/H} ,\quad \text{ and }\quad
	G_\varepsilon(u) = e^{-\pi\mathrm{Var}(\sum_{i=1}^{2q} \varepsilon_i B_{u_i})}.
\end{align}
\end{lemma}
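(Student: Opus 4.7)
The plan is to expand $|\hat{\mu}_G(\xi_1,\xi_2)|^{2q}$ directly from~\eqref{b1} as a $(2q)$-fold integral over the cube $[0,1]^{2q}$, decompose that cube into its ordered simplices via a Kahane-type argument (which exposes the sign vectors in $\mathcal{A}_{2q}$), take expectation using the Gaussian characteristic function, and finally rescale time to pass from $\Delta_1$ to $\Delta_T$ using the self-similarity of $B$.

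First I would multiply out
\begin{align*}
|\hat{\mu}_G(\xi)|^{2q} = \hat{\mu}_G(\xi)^{q}\,\overline{\hat{\mu}_G(\xi)}^{\,q} = \int_{[0,1]^{2q}} \exp\Big(-2\pi i \sum_{i=1}^{2q} \tilde{\varepsilon}_i (\xi_1 t_i + \xi_2 B_{t_i})\Big)\, dt,
\end{align*}
where $\tilde{\varepsilon} = (1,\dots,1,-1,\dots,-1)$ has $q$ coordinates of each sign. The combinatorial core of Kahane's decomposition is to split $[0,1]^{2q}$ into its $(2q)!$ open simplices indexed by permutations of the coordinates, and then relabel in each simplex so that it coincides with $\Delta_1$. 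Under such a relabeling, the sign vector $\tilde{\varepsilon}$ is transported to some $\varepsilon \in \{-1,1\}^{2q}$ with $q$ entries $+1$ and $q$ entries $-1$, i.e. $\varepsilon \in \mathcal{A}_{2q}$ as defined in~\eqref{A}. Since each such $\varepsilon$ is produced by exactly $(q!)^{2}$ permutations of $\tilde{\varepsilon}$ (permuting the $+1$'s among themselves and the $-1$'s among themselves), this yields
\begin{align*}
|\hat{\mu}_G(\xi)|^{2q} = (q!)^{2} \sum_{\varepsilon \in \mathcal{A}_{2q}} \int_{\Delta_1} \exp\Big(-2\pi i \sum_{i=1}^{2q} \varepsilon_i (\xi_1 t_i + \xi_2 B_{t_i})\Big)\, dt.
\end{align*}

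Next I would take expectation under the integral. Since $\sum_i \varepsilon_i B_{t_i}$ is centered Gaussian, its characteristic function produces a factor of the form $\exp(-c\,\xi_2^{2}\,\mathrm{Var}(\sum_i \varepsilon_i B_{t_i}))$ with an absolute constant $c$. The last step is the change of variables $u_i = T t_i$ with $T = |\xi_2|^{1/H}$, giving a Jacobian $T^{-2q} = |\xi_2|^{-2q/H}$; this accounts for the prefactor in~\eqref{2q:moment}. The self-similarity $B_{u/T}$ has the same law as $T^{-H} B_u$, hence $\mathrm{Var}(\sum_i \varepsilon_i B_{u_i/T}) = T^{-2H}\, \mathrm{Var}(\sum_i \varepsilon_i B_{u_i})$. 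The choice of $T$ then makes $\xi_2^{2} T^{-2H}$ collapse to an absolute constant, so the Gaussian factor reduces to $G_{\varepsilon}(u)$ as in~\eqref{lambda,T}; similarly $\xi_1 \sum_i \varepsilon_i t_i = (\xi_1/T)\langle \varepsilon, u\rangle = \lambda \langle \varepsilon, u\rangle$ for $\lambda$ as in~\eqref{lambda,T}. Reassembling all pieces reproduces $\mathcal{I}^{\lambda}_{T}[\varepsilon, G_{\varepsilon}]$ from~\eqref{I} and yields~\eqref{2q:moment}.

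The main (and essentially only) subtle point in this argument is the combinatorial bookkeeping in the first step: one has to verify that after decomposing $[0,1]^{2q}$ into ordered simplices, every $\varepsilon \in \mathcal{A}_{2q}$ arises from exactly $(q!)^{2}$ permutations of $\tilde{\varepsilon}$, and that nothing else appears. Once that is settled, the expectation step is a direct use of the Gaussian characteristic function and the rescaling step is a routine application of the scaling law of fractional Brownian motion.
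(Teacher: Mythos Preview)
Your proposal is correct and follows precisely the approach the paper itself attributes to~\cite{LL25}, namely ``Kahane's decomposition and the scaling property of fractional Brownian motion''; the paper does not give its own proof of this cited lemma, but your three steps (simplex decomposition with the $(q!)^2$ multiplicity, Gaussian characteristic function, self-similar rescaling $u=Tt$) are exactly the intended argument. The only nit is that when you track the absolute constant you will find $\exp(-2\pi^2\,\mathrm{Var})$ rather than $\exp(-\pi\,\mathrm{Var})$, so either the constant in~\eqref{lambda,T} is a convention choice in~\cite{LL25} or a harmless typo; it does not affect any of the subsequent estimates in the paper.
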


With formula \eqref{2q:moment} in hand, and keeping in mind that we wish to get the horizontal bound
\[
	\E\left[ |\hat{\mu}_G(\xi_1,\xi_2)|^{2q} \right] \le \frac{C_q}{|\xi_1|^{q\gamma_1}}
\]
with $\gamma_1 = 1$, it is readily checked that it is sufficient to bound the integrals $\mathcal{I}_T^\lambda[\varepsilon,G]$ in \eqref{I} as follows:
\begin{align}\label{a}
	|\mathcal{I}_T^\lambda[\varepsilon,G_\varepsilon]| \le \frac{C_q}{|\lambda|^q} T^q
\end{align}
where we recall from \eqref{lambda,T} that $\lambda = \xi_1/|\xi_2|^{1/H}$ and $T = |\xi_2|^{1/H}$.
As mentioned in the introduction, the bound \eqref{a} will be achieved by combining Lemma \ref{lem:I[eps,G]} and Lemma \ref{lem:I:UB} below,
through an intricate integration by parts and combinatorial procedure.
In particular, it should be clear from \eqref{a} that we need $q$ integrations by parts in $\mathcal{I}_T^\lambda[\varepsilon,G]$ if we wish to achieve the bound $|\lambda|^{-q}T^q$.
Calibrating those $q$ integrations by parts in the $2q$-dimensional simplex $\Delta_T$ is of fundamental importance.

Let us set up the stage for our combinatorial formula for the integral $\mathcal{I}[\varepsilon, G_\varepsilon]$ in \eqref{2q:moment} which has been obtained in \cite{LL25} via integration by parts.
Note that whenever integration by parts is performed on a simplex like $\Delta_{T}$, one has to take care of several boundary terms. In our case, three terms are produced for each integration by parts. 
Two of those terms involve evaluation of a variable at a boundary value and hence that variable is removed.
In order to describe the formula, we need to extend the definition of $\mathcal{I}[\varepsilon, G]$ in \eqref{I} to the case that $\varepsilon = (\varepsilon_1,\dots, \varepsilon_{2q}) \in (\Z \cup\{\ast\})^{2q}$, where $\ast$ indicates the removal of an entry of $\varepsilon$. Below we give a definition of a related operation on Rademacher type variables.

\begin{definition}\label{def:Phi-j}
Let $\ep\in(\Z\cup\{\ast\})^{2q}$ with $q \ge 1$, and let $j\in \{1,\dots,2q\}$.
Suppose that $\ep_{j-1}, \ep_{j+1} \ne \ast$ if $j \not\in\{1,2q\}$;
$\ep_2\ne \ast$ if $j=1$; $\ep_{2q-1} \ne \ast$ if $j=2q$.
The operation $\Phi_j^-$ on $\ep$ is defined by
\begin{equation*}
\Phi_j^-(\varepsilon) = \begin{cases}
	(\varepsilon_1, \cdots, \varepsilon_{j-1}+\varepsilon_j, \underbrace{\ast}_{\text{$j$-th position}}, \varepsilon_{j+1}, \cdots, \varepsilon_{2q}) & \text{if $j>1$,}\\
	(\ast, \varepsilon_2, \cdots, \varepsilon_{2q}) & \text{if $j=1$.}
	\end{cases}
\end{equation*}
In the same way, the operation $\Phi_j^+$ is defined by
\begin{equation*}
\Phi_j^+(\varepsilon) = \begin{cases}
	(\varepsilon_1,\cdots,\varepsilon_{j-1},\underbrace{\ast}_{\text{$j$-th position}},\varepsilon_{j+1}+\varepsilon_j, \cdots, \varepsilon_{2q}) & \text{if $j<2q$,}\\
	(\varepsilon_1, \cdots, \varepsilon_{j-1},\ast) & \text{if $j=2q$.}
	\end{cases}
\end{equation*}
\end{definition}

\noindent
Observe that the operations $\Phi_j^-$ and $\Phi_j^+$ also define an action on a function $G:\R^{2q}\to\R^{2q}$. Namely we set 
\begin{equation}\label{Phi-G}
\Phi_j^-(G)(u_1,\cdots,u_{2q}) = \begin{cases}
	G(u_1,\cdots, u_{j-1},\underbrace{u_{j-1}}_{\text{$j$-th position}}, u_{j+1},\cdots, u_{2q}) & \text{if $j>1$,}\\
	G(0,u_2,\cdots,u_{2q}) & \text{if $j=1$,}
	\end{cases}
\end{equation}
and the action $\Phi_j^+$ is defined by
\begin{equation}\label{Phi+G}
\Phi_j^+(G)(u_1,\cdots,u_{2q}) = \begin{cases}
	G(u_1,\cdots, u_{j-1},\underbrace{u_{j+1}}_{\text{$j$-th position}}, u_{j+1},\cdots, u_{2q}) & \text{if $j<2q$,}\\
	G(u_1,\cdots,u_{2q-1},T) & \text{if $j=2q$.}
	\end{cases}
\end{equation}

Before stating our main formula for integrations by parts on simplexes, we label some more notation for indices, domains and integrals:
\begin{enumerate}[label=\textbf{(\roman*)}]
\item
Our integration by parts will involve differentiations or evaluations on functions. 
In order to perform integration by parts multiple times in a systematic way and to obtain a tractable formula, we will only integrate by parts with respect to the odd-indexed variables $u_1,u_3,\dots,u_{2q-1}$.
For this reason, we consider the following set of operations:
\begin{equation}\label{b4}
	\Sigma(q) = \Sigma_1 \times \Sigma_3 \times \cdots\times \Sigma_{2q-1}, 
	\quad \text{where}\quad
	\Sigma_j = \{\Phi_{j}^-, \Phi_{j}^+, \partial_{u_{j}}\}.
\end{equation}

\item
For each $\sigma=(\sigma_1,\sigma_3,\dots, \sigma_{2q-1}) \in \Sigma(q)$,
we may partition the indices of $\si$ as follows:
\begin{align*}
	\{1,3,\dots, 2q-1\} = J_1(\sigma) \cup J_2(\sigma) \cup J_3(\sigma),
\end{align*}
where the sets $J_{1},J_{2}, J_{3}$ are subsets of $\{1,3,\dots,2q-1\}$ defined by
\begin{equation*}
	J_1(\sigma) = \{j: \sigma_j = \Phi_{j}^-\}, \quad J_2(\sigma) = \{j: \sigma_j = \Phi_{j}^+\},\quad
	J_3(\sigma) = \{j: \sigma_j = \partial_{u_{j}}\}.
\end{equation*}
Let us also write 
\begin{align}\label{J1uJ2}
	J_1(\sigma) \cup J_2(\sigma) = \{k_1, \dots, k_m\}.
\end{align}
Note that for $j\in J_1(\sigma) \cup J_2(\sigma)$, the action $\si_{j}$ is introduced in Definition~\ref{def:Phi-j}. One can then compose the actions and form $\alpha :=\sigma_{k_m} \cdots \sigma_{k_1} (\varepsilon)$ for $\ep\in\{-1,1\}^{2q}$. We get 
\begin{align}\begin{split}\label{alpha}
	\alpha  \in \{0, \pm 1,\pm 2, \pm 3, \ast \}^{2q} 
	\quad \text{and}\quad
	\{j: \alpha_j = \ast\} = \{k_1,\dots, k_m\}.
\end{split}\end{align}
Note that all non-$\ast$ entries of $\alpha$ do not exceed $\pm3$ because we have chosen to apply operations $\sigma_j \in \Sigma_j$ only to the odd-indexed entries $j = 1,3,\dots,2q-1$.
For example, when $q=2$, the maximal value $\pm3$ may be achieved for $\alpha = \Phi_3^- \Phi_1^+(\ep) = (\ast, \ep_1+\ep_2+\ep_3, \ast, \ep_4)$.

\item
Recall that the simplex $\Delta_{T}$ is defined by~\eqref{b3}. 
For $\al$ given as in \eqref{alpha}, we let $\Delta_{\alpha, T}$ denote the simplex in $\R^{2q-m}$ with the coordinates $u_{k_1},\dots,u_{k_m}$ removed, i.e.,
\begin{align}\label{partial:simplex}
	\Delta_{\alpha,T} = \{(u_1,\dots, u_{k_1-1}, u_{k_1+1}, \dots, u_{k_m-1}, u_{k_m+1}, \dots, u_{2q}) : 0<u_1<\cdots<u_{2q}<T\}.
\end{align}

\item
For $\alpha \in (\Z \cup \{\ast\})^{2q}$, the definition of the multiple integral $\mathcal{I}[\ep, G]$ in~\eqref{I} can be extended to the partial simplex $\Delta_{\alpha, T}$ by setting
\begin{align}\label{I[alpha,G]}
	\mathcal{I}[\alpha, G] = \mathcal{I}^\lambda_T[\alpha,G] := \int_{\Delta_{\alpha,T}} e^{-2\pi i\lambda \langle\alpha, u \rangle} G(u) du,
\end{align}
which is a $2q-m$ iterated integral over $\Delta_{\alpha, T}$ and $\langle \alpha, u \rangle$ is identified as an inner product on $\R^{2q-m}$ with $\ast$ positions removed.

\end{enumerate}

\begin{example}\label{ex:b5}
In order to illustrate our integration by parts procedure, let us take a simple example of integration on an 8-dimensional simplex. The variables $\ep$ and $\sigma$ are represented in Figure~\ref{fig:f1} below.

 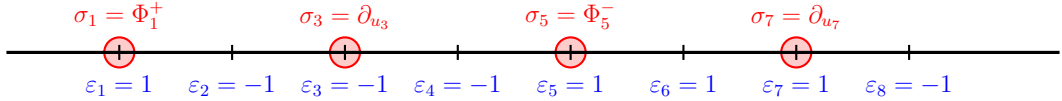
\begin{figure}[h!]
    \centering
    \begin{tikzpicture}[
    tick/.style={thick},
    circle/.style={draw=red, fill=red!20, thick}, 
    label/.style={above, font=\small}
]
\foreach \x in {1,3,5,7} {
    \draw[circle] (1.5*\x, 0) circle (0.2cm); 
}

\draw[very thick] (0,0) -- (14,0);

\foreach \x in {1,2,...,8} {
    \draw[tick] (1.5*\x, -0.1) -- (1.5*\x, 0.1);
    }
    
    
\node[below, blue, scale=.8] at (1.5, -0.2) {$\varepsilon_1 = 1$};
\node[below, blue, scale=.8] at (3, -0.2) {$\varepsilon_2 = -1$};
\node[below, blue, scale=.8] at (4.5, -0.2) {$\varepsilon_3 = -1$};
\node[below, blue, scale=.8] at (6, -0.2) {$\varepsilon_4 = -1$};
\node[below, blue, scale=.8] at (7.5, -0.2) {$\varepsilon_5 = 1$};
\node[below, blue, scale=.8] at (9, -0.2) {$\varepsilon_6 = 1$};
\node[below, blue, scale=.8] at (10.5, -0.2) {$\varepsilon_7 = 1$};
\node[below, blue, scale=.8] at (12, -0.2) {$\varepsilon_8 = -1$};

\node[above, red, scale=.8] at (1.5, 0.2) {$\si_{1}=\Phi_{1}^{+}$};
\node[above, red, scale=.8] at (4.5, 0.2) {$\si_{3}=\partial_{u_{3}}$};
\node[above, red, scale=.8] at (7.5, 0.2) {$\si_{5}=\Phi_{5}^{-}$};
\node[above, red, scale=.8] at (10.5, 0.2) {$\si_{7}=\partial_{u_{7}}$};

\end{tikzpicture}
 \caption{Example of values for the variables $\ep$ and $\si$, towards an integration in $\Delta_{\alpha,T}$}
 \label{fig:f1}
\end{figure}
\noindent
Note that for this transformation we have $J_{1}(\si)=\{5\}$, $J_{2}(\si)=\{1\}$ and $J_{3}(\si)=\{3,7\}$. In the sequel, only $\si_{1}$ and $\si_{5}$ will affect the $\ep_{j}$'s.
\end{example}

We now recap the combinatorial integration by parts formula obtained in \cite{LL25} through integrations by parts with respect to the variables $u_1, u_3,\dots,u_{2q-1}$.

\begin{lemma}\cite[Proposition 3.4]{LL25}\label{lem:I[eps,G]}
	For any $\varepsilon \in \{-1,1\}^{2q}$, let $G_\varepsilon$ be as defined in \eqref{lambda,T} and $\mathcal{I}[\varepsilon, G_\varepsilon]$ as introduced in~\eqref{I}. Also recall that $\Sigma(q)$ is the set given by~\eqref{b4}.
	Then
	\begin{align}\label{I[eps,G]}
		\mathcal{I}[\varepsilon, G_\varepsilon] = 
		\left( \frac{1}{2\pi i \lambda} \right)^q 
		\prod_{j=1}^q \frac{1}{\varepsilon_{2j-1}} \sum_{\sigma \in \Sigma(q)} 
		\mathcal{I}[\sigma; \varepsilon, G_\varepsilon].
	\end{align}
	Here, $\mathcal{I}[\sigma; \varepsilon, G_\varepsilon]$ is defined by 
	\begin{align}\label{I[sigma,G]}
		\mathcal{I}[\sigma; \varepsilon, G_\varepsilon] := (-1)^{\# J_2(\sigma)} \mathcal{I}[\sigma_{k_m}\cdots\sigma_{k_1}(\varepsilon)\,,\, \sigma_{2q-1} \cdots \sigma_3 \sigma_1 (G_\varepsilon)],
	\end{align}
	where the indices $k_1,\dots,k_m$ are given by \eqref{J1uJ2} above, $\sigma_{k_m}\cdots\sigma_{k_1}(\varepsilon)$ is defined through the actions of $\sigma_j \in \Sigma_j$ on $\ep$ introduced in Definition \ref{def:Phi-j}, $\sigma_{2q-1} \cdots \sigma_1 (G_\varepsilon)$ is defined through the actions of $\sigma_j$ on functions given by \eqref{Phi-G} and \eqref{Phi+G},
	and $\mathcal{I}[\sigma_{k_m}\cdots\sigma_{k_1}(\varepsilon)\,,\, \sigma_{2q-1} \cdots \sigma_1 (G_\varepsilon)]$ is defined according to \eqref{I[alpha,G]} above.
\end{lemma}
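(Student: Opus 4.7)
The plan is to establish~\eqref{I[eps,G]} by performing $q$ successive integrations by parts in $\mathcal{I}[\varepsilon, G_\varepsilon]$, one for each odd-indexed variable $u_1, u_3, \dots, u_{2q-1}$. The factorization $e^{-2\pi i\lambda \langle \varepsilon, u\rangle} = \prod_{j=1}^{2q} e^{-2\pi i\lambda \varepsilon_j u_j}$ localizes each step to a single $u_j$, and the identity $e^{-2\pi i \lambda \varepsilon_j u_j} = (-2\pi i \lambda \varepsilon_j)^{-1}\,\partial_{u_j} e^{-2\pi i \lambda \varepsilon_j u_j}$ converts the exponential into a derivative that is then moved onto $G_\varepsilon$. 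The factor $(2\pi i\lambda)^{-q}$ and the product $\prod_{j=1}^q \varepsilon_{2j-1}^{-1}$ appearing in~\eqref{I[eps,G]} are produced automatically as a byproduct of these $q$ IBP steps.

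More precisely, I would use Fubini to iterate the simplex integral with $u_1$ innermost and $u_{2q}$ outermost, with the convention $u_0 = 0$ and $u_{2q+1} = T$. Performing one IBP in $u_1$ on $(0, u_2)$ through the classical formula $\int u' v = [uv]_0^{u_2} - \int u v'$ yields three terms, each carrying the common prefactor $(2\pi i\lambda \varepsilon_1)^{-1}$: an upper boundary contribution at $u_1 = u_2$ with sign $-$, in which $G_\varepsilon$ is evaluated as $\Phi_1^+(G_\varepsilon)$ and the leftover phase $e^{-2\pi i\lambda\varepsilon_1 u_2}$ merges into the exponent of $u_2$ so that its weight becomes $\varepsilon_1 + \varepsilon_2$, precisely the action $\Phi_1^+$ of Definition~\ref{def:Phi-j}; a lower boundary contribution at $u_1 = 0$ with sign $+$ realizing $\Phi_1^-(\varepsilon)$ and $\Phi_1^-(G_\varepsilon)$; and an interior contribution with sign $+$ in which $\partial_{u_1}$ acts on $G_\varepsilon$ while $\varepsilon$ is unchanged. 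These outcomes match exactly the three choices for $\sigma_1 \in \Sigma_1 = \{\Phi_1^-, \Phi_1^+, \partial_{u_1}\}$ from~\eqref{b4}, with sign $-1$ on $\Phi_1^+$ and $+1$ otherwise.

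I would then iterate the same procedure on $u_3, u_5, \dots, u_{2q-1}$. Because each IBP acts on a distinct variable, and because each $\Phi_j^\pm$ substitutes $u_j$ only by the adjacent even-indexed value $u_{j\pm 1}$ (or by $0$ when $j=1$), the $q$ steps commute and produce in total $3^q$ terms parameterized by $\sigma = (\sigma_1, \sigma_3, \dots, \sigma_{2q-1}) \in \Sigma(q)$. The cumulative scalar prefactor is $(2\pi i \lambda)^{-q} \prod_{j=1}^q \varepsilon_{2j-1}^{-1}$; the cumulative sign is $(-1)^{\#J_2(\sigma)}$ since only the $\Phi^+$ branches contribute a minus sign; the function becomes $\sigma_{2q-1}\cdots\sigma_1(G_\varepsilon)$ via the one-variable actions~\eqref{Phi-G}--\eqref{Phi+G}; and the remaining integration runs over the partial simplex $\Delta_{\alpha,T}$ from~\eqref{partial:simplex} with $\alpha = \sigma_{k_m}\cdots\sigma_{k_1}(\varepsilon)$, consistently with~\eqref{I[alpha,G]}. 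Reading these data off the right-hand side of~\eqref{I[sigma,G]} and summing over $\sigma \in \Sigma(q)$ yields~\eqref{I[eps,G]}.

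The main delicate point of the argument is combinatorial rather than analytic: each boundary substitution $u_j = u_{j\pm 1}$ leaves behind a phase $e^{-2\pi i\lambda \varepsilon_j u_{j\pm 1}}$ that must be merged into the exponent of the neighbouring even variable, and several such merges accumulate additively to produce entries of $\alpha$ in $\{0,\pm 1,\pm 2, \pm 3,\ast\}$ as recorded in~\eqref{alpha}. Verifying that, because we only integrate by parts in odd-indexed variables and each $\Phi^\pm_j$ only touches the immediately adjacent even entries $\varepsilon_{j-1}$ or $\varepsilon_{j+1}$, distinct IBP steps do not interact beyond what is captured by the composition $\sigma_{k_m}\cdots\sigma_{k_1}$ is what justifies writing~\eqref{I[eps,G]} as an unordered sum over $\Sigma(q)$; this bookkeeping is straightforward in principle but requires some care to formalize cleanly.
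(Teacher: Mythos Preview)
Your argument is correct and matches the approach that the paper implicitly describes in the setup preceding the lemma (see~\eqref{b4} and the surrounding text). Note, however, that the paper does not prove this lemma itself: it is quoted as \cite[Proposition~3.4]{LL25} and simply recalled here, so there is no ``paper's own proof'' to compare against. Your sketch is exactly the intended mechanism---iterated IBP in the odd-indexed variables $u_1,u_3,\dots,u_{2q-1}$, each step producing three branches (lower boundary $\Phi_j^-$, upper boundary $\Phi_j^+$ with sign $-1$, interior $\partial_{u_j}$), and the non-interference between steps following from the fact that each $\Phi_j^\pm$ only modifies the adjacent even-indexed entry.
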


In order to estimate \eqref{I[eps,G]}, we need to recall more notation and properties of $\mathcal{I}[\sigma; \varepsilon, G_\varepsilon]$ defined in \eqref{I[sigma,G]} for any $\varepsilon\in \mathcal{A}_{2q}$ and $\sigma=(\sigma_1,\sigma_3,\dots, \sigma_{2q-1}) \in \Sigma(q)$.
Label and order the indices $k_1<\cdots<k_m$ and $k'_1<\cdots<k'_\ell$ such that 
\begin{align}\label{J1:J2:J3}
	J_1(\sigma)\cup J_2(\sigma) = \{k_1,\dots,k_m\}
	\quad\text{and}\quad
	J_3(\sigma) = \{k'_1,\dots,k'_\ell\},
\end{align}
where we remark that
\begin{align}\label{m+l=q}
	m+\ell=q.
\end{align}
Then $\sigma_{k_m} \cdots \sigma_{k_1} (\varepsilon)$ has $m$ entries that are equal to $\ast$ and \eqref{I[sigma,G]} can be written as
\begin{align}\label{I[sigma,G]:2}
	\mathcal{I}[\sigma; \varepsilon, G_\varepsilon] = (-1)^{\# J_2(\sigma)} \int_{\Delta_{\sigma_{k_m}\cdots\sigma_{k_1}(\varepsilon),T}} e^{-2\pi i \lambda\langle \sigma_{k_m}\cdots\sigma_{k_1}(\varepsilon), u \rangle} \sigma_{2q-1}\cdots\sigma_3\sigma_1 G_\varepsilon(u) du,
\end{align}
where the integral is a $2q-m$ iterated integral
and $\Delta_{\sigma_{k_m}\cdots\sigma_{k_1}(\varepsilon),T}$ is defined by \eqref{partial:simplex}.
We are now ready to give a simpler expression for the right hand side of \eqref{I[sigma,G]}.

\begin{lemma}\cite[Lemma 6.3]{LL25}\label{lem:variables}
	For $q \ge 1$, let $\ep \in \{-1,1\}^{2q}$ and $\sigma \in \Sigma(q)$ as defined in \eqref{b4}.
	With the notation introduced above, we set
	\begin{gather}\begin{gathered}\label{j_1-j_I}
		\{j_1<\cdots<j_I\} = \{ j : [\sigma_{k_m}\cdots\sigma_{k_1} (\varepsilon)]_j \not\in \{0, \ast\} \}\quad \text{and}\quad\\
		(a_1,\dots, a_I) = ([\sigma_{k_m}\cdots\sigma_{k_1} (\varepsilon)]_{j_1}, \dots, [\sigma_{k_m}\cdots\sigma_{k_1} (\varepsilon)]_{j_I}).
	\end{gathered}\end{gather}
	Then we have
	\begin{align}\label{E:sigmaG}
	\sigma_{2q-1} \cdots \sigma_3 \sigma_1 G_\varepsilon(u) = \partial_{u_{k_1'}}\cdots \partial_{u_{k_\ell'}}G_{\sigma_{k_m} \cdots \sigma_{k_1} (\varepsilon)}(u),
\end{align}
	which is a function depending on the variables $u_{j_1},\dots, u_{j_I}$ and independent of all other variables $u_j$ with $j \not\in\{j_1, \dots, j_I\}$.
	If we label the non-trivial variables $u_{j_1},\dots,u_{j_I}$ as $s_1,\dots,s_I$,
	then all differentiation variables $u_{k_1'},\dots,u_{k_\ell'}$ are non-trivial variables and can be labelled as $s_{p_1},\dots,s_{p_\ell}$ for a subset of indices $p_1<\cdots<p_\ell$.
	Hence, one can recast \eqref{E:sigmaG} as
	\begin{align}\label{E:sigmaG:s}
		\sigma_{2q-1}\cdots\sigma_3\sigma_1 G_\ep(u) = \partial_{s_{p_1}}\cdots\partial_{s_{p_\ell}} G_{\sigma_{k_m} \cdots \sigma_{k_1} (\varepsilon)}(s).
	\end{align}
\end{lemma}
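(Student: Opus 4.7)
The plan is to decouple the two types of operations composing $\sigma_{2q-1}\cdots\sigma_3\sigma_1$: the substitutions $\Phi_j^\pm$ for $j \in J_1(\sigma)\cup J_2(\sigma)$ and the derivatives $\partial_{u_j}$ for $j \in J_3(\sigma)$. I will first push all $\Phi$-operations past the derivatives, then identify the resulting transformed function with $G_{\sigma_{k_m}\cdots\sigma_{k_1}(\varepsilon)}$, and finally read off its nontrivial variables. The key structural observation that makes everything clean is that every operation in $\Sigma(q)$ is attached to an odd-indexed position, so index interactions between $\Phi$-substitutions and derivatives are governed by parity.

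The crucial compatibility identity is
\[
\Phi_j^\pm(G_\varepsilon)(u) = G_{\Phi_j^\pm(\varepsilon)}(u),
\]
valid for any $\varepsilon\in(\Z\cup\{\ast\})^{2q}$ and any admissible $j$, where the right-hand side is viewed as a function with the $j$-th coordinate suppressed. This is immediate from $G_\varepsilon(u)=\exp(-\pi\,\mathrm{Var}(\sum_i \varepsilon_i B_{u_i}))$: substituting $u_j\mapsto u_{j-1}$ in the sum collapses $\varepsilon_{j-1}B_{u_{j-1}}+\varepsilon_j B_{u_j}$ into $(\varepsilon_{j-1}+\varepsilon_j)B_{u_{j-1}}$, which is exactly the action of $\Phi_j^-$ on $\varepsilon$ from Definition \ref{def:Phi-j}; the case $\Phi_j^+$ is handled symmetrically. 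Next I would verify that each $\Phi_j^\pm$ with $j\in J_1\cup J_2$ commutes with each $\partial_{u_{k'}}$ with $k'\in J_3$. Since $J_1\cup J_2$ and $J_3$ are disjoint we have $k'\ne j$; moreover, $\Phi_j^\pm$ only touches the arguments at the odd position $j$ (which is removed) and at the even position $j\pm 1$, whereas $\partial_{u_{k'}}$ differentiates in an odd-indexed variable, so the two operations act on disjoint coordinates. Iterating these two facts yields
\[
\sigma_{2q-1}\cdots\sigma_3\sigma_1 G_\varepsilon(u) = \partial_{u_{k'_1}}\cdots\partial_{u_{k'_\ell}}\, G_{\sigma_{k_m}\cdots\sigma_{k_1}(\varepsilon)}(u),
\]
which is exactly \eqref{E:sigmaG}.

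For the dependency structure, note that for any $\beta \in (\Z\cup\{\ast\})^{2q}$ the function $G_\beta$ depends only on those coordinates $u_i$ with $\beta_i\notin\{0,\ast\}$: coefficients equal to $0$ or $\ast$ contribute neither to the variance nor to its arguments. Applying this with $\beta=\sigma_{k_m}\cdots\sigma_{k_1}(\varepsilon)$ identifies the active variables as exactly $u_{j_1},\ldots,u_{j_I}$ in the notation of \eqref{j_1-j_I}. To check that the derivative indices $k'_1,\ldots,k'_\ell$ lie in $\{j_1,\ldots,j_I\}$, recall that each $k'_i$ is odd, and the $\Phi$-operations composing $\sigma_{k_m}\cdots\sigma_{k_1}$ act only at odd positions $j\neq k'_i$, either removing position $j$ or shifting the combined mass to the even neighbor $\varepsilon_{j\pm 1}$; they never alter $\varepsilon_{k'_i}\in\{-1,+1\}$. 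Hence $[\sigma_{k_m}\cdots\sigma_{k_1}(\varepsilon)]_{k'_i}=\varepsilon_{k'_i}\ne 0,\ast$, so each $k'_i$ is nontrivial and can be relabeled as $s_{p_i}$ for an increasing sequence $p_1<\cdots<p_\ell$, giving \eqref{E:sigmaG:s}. The delicate part of the argument is bookkeeping the index shifts produced by compositions of $\Phi$-operations, but the strict parity separation between operation sites (odd) and value carriers (even) keeps the combinatorics manageable and guarantees that derivatives and substitutions decouple cleanly.
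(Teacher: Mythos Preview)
The paper does not supply its own proof of this lemma; it is quoted directly from \cite[Lemma~6.3]{LL25}, so there is no in-paper argument to compare against. That said, your proposal is correct and would stand as a self-contained proof. The three ingredients you isolate --- the compatibility $\Phi_j^\pm(G_\varepsilon)=G_{\Phi_j^\pm(\varepsilon)}$, the commutation of each $\Phi_j^\pm$ (which acts only on the odd slot $j$ and the even slot $j\pm1$) with each $\partial_{u_{k'}}$ (which acts on a distinct odd slot $k'\in J_3$), and the parity observation that the $\Phi$-operations never touch the odd position $k'$ so that $[\sigma_{k_m}\cdots\sigma_{k_1}(\varepsilon)]_{k'}=\varepsilon_{k'}\in\{-1,1\}$ --- are exactly what is needed, and you combine them cleanly. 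The only minor point worth making explicit is the boundary case $j=1$ with $\Phi_1^-$, where the identity $\Phi_1^-(G_\varepsilon)=G_{\Phi_1^-(\varepsilon)}$ uses $B_0=0$; you implicitly cover this, but it is the one place where the verification is not purely algebraic.
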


\smallskip

\begin{example}\label{ex:b6}
Let us go back to Example~\ref{ex:b5}. The transformation of the variables $\ep$ through
the action of $\si$ given in Figure~\ref{fig:f1} is summarized in Figure~\ref{fig:f2} below.
 \begin{figure}[h!]
    \centering
    \begin{tikzpicture}[
    tick/.style={thick},
    circle/.style={draw=red, fill=red!20, thick}, 
    label/.style={above, font=\small}
]
\foreach \x in {1,3,5,7} {
    \draw[circle] (1.5*\x, 0) circle (0.2cm); 
}

\draw[very thick] (0,0) -- (14,0);

\foreach \x in {1,2,...,8} {
    \draw[tick] (1.5*\x, -0.1) -- (1.5*\x, 0.1);
    }
    
    
\node[below, blue, scale=.8] at (1.5, -0.25) {$\alpha_1 = *$};
\node[below, blue, scale=.8] at (3, -0.2) {$\alpha_2 = 0$};
\node[below, blue, scale=.8] at (4.5, -0.2) {$\alpha_3 = -1$};
\node[below, blue, scale=.8] at (6, -0.2) {$\alpha_4 = 0$};
\node[below, blue, scale=.8] at (7.5, -0.25) {$\alpha_5 = *$};
\node[below, blue, scale=.8] at (9, -0.2) {$\alpha_6 = 1$};
\node[below, blue, scale=.8] at (10.5, -0.2) {$\alpha_7 = 1$};
\node[below, blue, scale=.8] at (12, -0.2) {$\alpha_8 = -1$};

\node[above,  dg, scale=.8] at (4.5, 0.3) {$a_{1}=-1$};
\node[right,  scale=.8] at (4.7, 0.2) {${\cre \partial_{a_{1}}}$};
\node[above, dg, scale=.8] at (9, 0.3) {$a_{2}=1$};
\node[above, dg, scale=.8] at (10.5, 0.3) {$a_{3}=1$};
\node[right,  red, scale=.8] at (10.7, 0.2) {$\partial_{a_{3}}$};
\node[above, dg, scale=.8] at (12, 0.3) {$a_{4}=-1$};

\end{tikzpicture}
 \caption{Example of action in the 6-dimensional partial simplex $\Delta_{\alpha,T}$ where $\alpha = \si_{1}\si_{5}(\ep)$.
 }
 \label{fig:f2}
\end{figure}
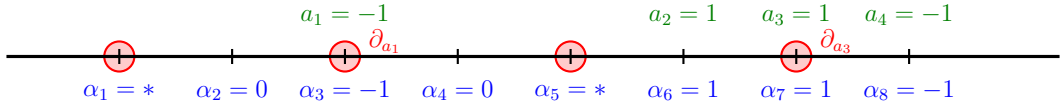
\noindent
Note that in this case the non-trivial integration variables (related to the non-trivial variables $a$) are $s_{1},\ldots,s_{4}$.
In addition, the differentiation indices  are $p_{1}=1$ and $p_{2}=3$. Therefore the collection $\mathscr{P}_2\{p_1,\dots,p_\ell\}$ introduced below in Lemma~\ref{lem:fdb} is reduced to
\begin{equation}\label{b7}
\mathscr{P}_2\{1,3\}
=\Big\{  \big[ \{1\} , \, \{3\} \big] , \,   \big[ \{1 , 3\} \big] \Big\} .
\end{equation}
\end{example}

Our next lemma labels a property of the indices $p$ defined above.
It will be useful in our computations below.

\begin{lemma}\cite[Lemma 6.6]{LL25}\label{lem:p_i}
	Let $p_1,\dots,p_\ell$ be the indices introduced in Lemma \ref{lem:variables}. 
	Then we have $p_{i+1}-p_i \ge 2$ for $1\le i < \ell$.
\end{lemma}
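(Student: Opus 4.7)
The plan is to translate the claim $p_{i+1} - p_i \ge 2$ into a statement about the entries of $\alpha := \sigma_{k_m}\cdots\sigma_{k_1}(\varepsilon)$, and then exploit a sum-preservation and parity argument. First I would note that since $s_{p_i} = u_{k_i'}$ by construction of Lemma~\ref{lem:variables}, one has $j_{p_i} = k_i'$ for every $i$. Hence $p_{i+1}-p_i \ge 2$ is equivalent to exhibiting at least one index $j$ satisfying $k_i' < j < k_{i+1}'$ and $\alpha_j \notin \{0, \ast\}$.

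To produce such an index I would introduce the block $B_i = \{k_i'+1, k_i'+2, \ldots, k_{i+1}'-1\}$ and argue that it is \emph{closed} under the operations at play. Since $k_i'$ and $k_{i+1}'$ are consecutive elements of $J_3(\sigma)$, every odd index in $B_i$ lies in $J_1(\sigma)\cup J_2(\sigma)$; thus the odd positions in $B_i$ are all marked $\ast$ in $\alpha$, while the even positions in $B_i$ remain non-$\ast$. A direct inspection of Definition~\ref{def:Phi-j} then shows that every $\Phi_j^\pm$ with $j$ odd in $B_i$ transfers the value $\varepsilon_j$ to an adjacent even position $j\pm 1$ still lying in $B_i$, whereas the nearest operations from outside $B_i$ act at indices $\le k_i' - 2$ or $\ge k_{i+1}' + 2$ (because $k_i', k_{i+1}' \in J_3$ carry no $\Phi$-operation) and hence shift their $\varepsilon$-values to positions outside $B_i$. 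Combining both observations yields
\begin{equation*}
\sum_{\substack{j \in B_i \\ \alpha_j \neq \ast}} \alpha_j \;=\; \sum_{j \in B_i} \varepsilon_j.
\end{equation*}

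A parity count then finishes the argument: because both $k_i'$ and $k_{i+1}'$ are odd, the cardinality $|B_i| = k_{i+1}' - k_i' - 1$ is odd, and the right-hand side above is a sum of an odd number of $\pm 1$'s, hence a non-zero odd integer. Therefore at least one summand $\alpha_j$ on the left-hand side must be non-zero, producing the desired non-trivial index $j$ with $k_i' < j < k_{i+1}'$ and $\alpha_j \notin \{0, \ast\}$.

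The most delicate step I expect is the closure property of $B_i$: one must verify carefully that no operation outside the block leaks a value inside, and symmetrically that no operation inside leaks a value out. Both checks rely crucially on the facts that $k_i', k_{i+1}' \in J_3$ (so that no $\Phi$-operation is applied at the boundary of $B_i$) and that $k_i',k_{i+1}'$ are consecutive in $J_3$ (so that all intermediate odd indices belong to $J_1 \cup J_2$).
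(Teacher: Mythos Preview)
Your argument is correct. The paper itself does not prove this lemma; it is quoted verbatim from \cite[Lemma~6.6]{LL25} without reproducing the proof, so there is no in-paper argument to compare against. What you have written is a valid self-contained proof: the block $B_i=\{k_i'+1,\dots,k_{i+1}'-1\}$ is indeed closed under the $\Phi^\pm$ operations because (i) every odd index in $B_i$ lies in $J_1\cup J_2$ and transfers its value to an adjacent even index still in $B_i$, and (ii) the boundary odd indices $k_i',k_{i+1}'$ carry no $\Phi$-operation while the next odd indices outside can only reach positions $k_i'-1$ or $k_{i+1}'+1$, both outside $B_i$. The parity step is clean: $|B_i|=k_{i+1}'-k_i'-1$ is odd, forcing $\sum_{j\in B_i}\varepsilon_j\ne 0$ and hence some $\alpha_j\notin\{0,\ast\}$ with $k_i'<j<k_{i+1}'$.

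One small clarification worth adding when you write this up in full: the $\Phi_j^\pm$ operations at distinct odd indices commute (each affects only position $j$ and an adjacent even position, and when two such operations touch the same even position they simply add), so the sum identity $\sum_{j\in B_i,\ \alpha_j\neq\ast}\alpha_j=\sum_{j\in B_i}\varepsilon_j$ holds regardless of the order in which $\sigma_{k_1},\dots,\sigma_{k_m}$ are applied. You allude to this implicitly, but making it explicit removes any doubt about the composition $\sigma_{k_m}\cdots\sigma_{k_1}$.
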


We now focus on the computation of the derivatives on the right hand side of \eqref{E:sigmaG:s}.
To this aim, recall the following derivative formula obtained in \cite{LL25} using Fa\`a di Bruno's formula.

\begin{lemma}\cite[Proposition 6.9]{LL25}\label{lem:fdb}
	Consider $\varepsilon \in \mathcal{A}_{2q}$ and the function $G_\ep$ defined in \eqref{lambda,T}.
	For $\sigma \in \Sigma(q)$, recall that the non-trivial entries of $\sigma_{k_m}\cdots\sigma_{k_1}(\ep)$ are written as $a = (a_1,\dots,a_I)$ in \eqref{j_1-j_I}.
	Let $s=(s_1,\dots,s_I)$ be the variables introduced in Lemma \ref{lem:variables}.
	Then, the derivatives in equation \eqref{E:sigmaG:s} can be expressed as
	\begin{align*}
		\partial_{s_{p_1}}\cdots \partial_{s_{p_\ell}}G_{\sigma_{k_m} \cdots \sigma_{k_1} (\varepsilon)}(s) = \sum_{P \in \mathscr{P}_2\{p_1,\dots,p_\ell\}} \pi^{\# P} \exp(\pi g_a(s)) \prod_{B \in P} g_a^{(B)}(s),
	\end{align*}
	where $\mathscr{P}_2\{p_1,\dots, p_\ell\}$ denotes the set of all partitions $P$ of $\{p_1,\dots, p_\ell\}$ such that $\# B \le 2$ for all $B \in P$,
	\begin{align}\label{g_a}
		g_a(s) = -\mathrm{Var}\left( \sum_{i=1}^I a_i B_{s_i} \right),
	\end{align}
	and $g_a^{(B)}(s) = \partial_{s_{i_1}}\cdots \partial_{s_{i_n}} g_a(s)$ if $\# B = n$ and $B = \{s_{i_1},\dots,s_{i_n}\}$.
\end{lemma}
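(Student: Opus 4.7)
The plan is to expand the iterated derivative $\partial_{s_{p_1}}\cdots\partial_{s_{p_\ell}} G_{\sigma_{k_m}\cdots\sigma_{k_1}(\varepsilon)}(s) = \partial_{s_{p_1}}\cdots\partial_{s_{p_\ell}} e^{\pi g_a(s)}$ by means of the multivariate Fa\`a di Bruno formula, and then exploit the quadratic structure of $g_a$ to collapse the resulting sum over all set partitions down to the sum over $\mathscr{P}_2\{p_1,\dots,p_\ell\}$ appearing in the statement.

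First I would invoke Lemma \ref{lem:p_i}, which guarantees that the indices $p_1<\cdots<p_\ell$ are pairwise distinct, so $\partial_{s_{p_1}}\cdots\partial_{s_{p_\ell}}$ is a multiderivative in $\ell$ distinct variables and no subtlety from repeated indices arises. Applying the standard multivariate Fa\`a di Bruno formula to the composition $h \mapsto e^{\pi h}$ and using that $(e^{\pi h})^{(k)} = \pi^{k} e^{\pi h}$ then yields
\begin{align*}
\partial_{s_{p_1}}\cdots\partial_{s_{p_\ell}} e^{\pi g_a(s)} = \sum_{P} \pi^{\#P}\, e^{\pi g_a(s)} \prod_{B \in P} g_a^{(B)}(s),
\end{align*}
where the sum runs over all set partitions $P$ of $\{p_1,\dots,p_\ell\}$, the factor $\pi^{\#P}$ simply counts the $\#P$ instances in which the exponential is differentiated (once per block), and $g_a^{(B)}$ denotes the mixed partial of $g_a$ in the variables indexed by $B$.

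The key step is to observe that every partition with a block of size at least $3$ contributes zero. Expanding the variance in \eqref{g_a} via the fractional Brownian covariance \eqref{fBm:cov}, one obtains
\begin{align*}
g_a(s) = -\sum_{i=1}^{I} a_i^2\, s_i^{2H} - \sum_{1 \le i < j \le I} a_i a_j\bigl( s_i^{2H} + s_j^{2H} - |s_i - s_j|^{2H} \bigr),
\end{align*}
so $g_a$ is a sum of summands each depending on at most two of the variables $s_1,\dots,s_I$. Hence for any $B \subset \{p_1,\dots,p_\ell\}$ with $\#B \ge 3$ every such summand is annihilated by $\partial_B$, i.e., $g_a^{(B)} \equiv 0$. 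The Fa\`a di Bruno expansion therefore truncates to the sum over $\mathscr{P}_2\{p_1,\dots,p_\ell\}$ and produces exactly the identity in the statement. I do not expect any significant obstacle: the only substantive ingredient is the pairwise-in-$s_i$ structure of $g_a$, which is immediate from its definition as the variance of a Gaussian linear combination, and the separation property of Lemma \ref{lem:p_i} enters only to legitimize the unadorned multivariate Fa\`a di Bruno formula.
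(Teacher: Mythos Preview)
Your proposal is correct and follows essentially the same approach as the paper: the lemma is quoted from \cite{LL25} and not reproved here, but the paper's accompanying Remark explains that the formula comes from Fa\`a di Bruno together with the vanishing of all mixed partials of $g_a$ of order $\ge 3$ (relation \eqref{E:d...dg}). Your argument obtains this vanishing by writing $g_a$ explicitly as a sum of terms each depending on at most two variables, which is equivalent to invoking \eqref{E:d...dg} directly.
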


As a result of the previous considerations, we obtain the following bound for $\mathcal{I}[\sigma; \varepsilon, G_\varepsilon]$ defined in \eqref{I[sigma,G]}.

\begin{proposition}\label{pr:I[sigma,G]}
For any $\varepsilon \in \{-1,1\}^{2q}$ and $\sigma \in \Sigma(q)$ as defined in \eqref{b4}, let $G_\varepsilon$ be the function defined in \eqref{lambda,T} and $\mathcal{I}[\sigma; \varepsilon, G_\varepsilon]$ be as defined in \eqref{I[sigma,G]}.
Recall the indices $k_1,\dots,k_m$ and $k_1',\dots,k_\ell'$ defined in \eqref{J1:J2:J3}.
Also recall the variables $s_1,\dots,s_I$ and indices $p_1,\dots,p_\ell$ introduced in Lemma \ref{lem:variables}.
Then we have
\begin{align*}
	|\mathcal{I}[\sigma; \varepsilon, G_\varepsilon]|
	\le T^{2q-m-I} \sum_{P \in \mathscr{P}_2\{p_1,\dots, p_\ell\}} \pi^{\# P} \int_{0<s_1<\cdots<s_I<T} \exp(\pi g_a(s)) \prod_{B \in P}|g_a^{(B)}(s)| ds
\end{align*}
where $g_a$ is defined in \eqref{g_a} and $a=(a_1,\dots,a_I)$ is defined in \eqref{j_1-j_I}.
\end{proposition}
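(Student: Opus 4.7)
The plan is to bound $|\mathcal{I}[\sigma;\varepsilon,G_\varepsilon]|$ by brute force from the explicit representation \eqref{I[sigma,G]:2}, using the triangle inequality and invoking Lemmas \ref{lem:variables} and \ref{lem:fdb} in sequence. As a first step, I would pass absolute values under the integral in \eqref{I[sigma,G]:2}. The complex exponential $e^{-2\pi i \lambda \langle \alpha, u\rangle}$ has modulus one and the sign $(-1)^{\# J_2(\sigma)}$ disappears, so writing $\alpha = \sigma_{k_m}\cdots\sigma_{k_1}(\varepsilon)$ one obtains
\begin{equation*}
|\mathcal{I}[\sigma;\varepsilon,G_\varepsilon]| \le \int_{\Delta_{\alpha,T}} |\sigma_{2q-1}\cdots\sigma_3\sigma_1 G_\varepsilon(u)|\, du,
\end{equation*}
which is a $(2q-m)$-dimensional integral.

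Next, Lemma \ref{lem:variables} tells us that $\sigma_{2q-1}\cdots\sigma_1 G_\varepsilon$ depends only on the $I$ non-trivial variables $s_1 = u_{j_1},\dots, s_I = u_{j_I}$, with expression \eqref{E:sigmaG:s}. The remaining $2q - m - I$ coordinates are the ``trivial'' ones, corresponding to the indices $j$ with $\alpha_j = 0$. By Fubini I would fix $s_1 < \cdots < s_I$ and integrate the trivial variables first. Each trivial variable is constrained by the simplex ordering to an interval of length at most $T$, so the region of integration is contained in $[0,T]^{2q-m-I}$ and the inner integral is bounded by $T^{2q-m-I}$.

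Finally I would apply the Faà di Bruno expansion of Lemma \ref{lem:fdb} to the remaining function $\partial_{s_{p_1}}\cdots\partial_{s_{p_\ell}} G_\alpha(s)$ and take absolute values term by term. Since $g_a(s) = -\mathrm{Var}\bigl(\sum_i a_i B_{s_i}\bigr) \le 0$, the factor $\exp(\pi g_a(s))$ is real and positive and is left unchanged by the triangle inequality, yielding
\begin{equation*}
|\partial_{s_{p_1}}\cdots\partial_{s_{p_\ell}} G_\alpha(s)| \le \sum_{P \in \mathscr{P}_2\{p_1,\dots,p_\ell\}} \pi^{\# P} \exp(\pi g_a(s)) \prod_{B \in P} |g_a^{(B)}(s)|.
\end{equation*}
Combining the three steps and swapping the finite sum over $P$ with the integration over $s$ produces exactly the bound stated in the proposition.

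This proof is essentially an assembly step with no real obstacle: all the substantive work — the combinatorial integration by parts, the identification of non-trivial variables, and the derivative formula — is already contained in Lemmas \ref{lem:I[eps,G]}, \ref{lem:variables} and \ref{lem:fdb}. The only point worth double-checking is the $T^{2q-m-I}$ factor, which I justify by the crude inclusion of the trivial-variable region into a hypercube; a finer estimate using the ordered-simplex volume $T^{2q-m-I}/(2q-m-I)!$ would also work but is not needed here.
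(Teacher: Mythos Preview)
Your proof is correct and follows essentially the same route as the paper: bound the modulus of the oscillatory factor by one in \eqref{I[sigma,G]:2}, invoke Lemma~\ref{lem:variables} to reduce to the $I$ non-trivial variables and integrate out the remaining $2q-m-I$ trivial ones to produce the $T^{2q-m-I}$ factor, then apply the Fa\`a di Bruno expansion of Lemma~\ref{lem:fdb}. The paper's argument is identical in structure and level of detail.
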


\begin{proof}
Thanks to \eqref{I[sigma,G]:2} and Lemma \ref{lem:variables}, we have
\begin{align}\notag
	|\mathcal{I}[\sigma; \varepsilon, G_\varepsilon]| 
	&= \left| \int_{\Delta_{\sigma_{k_m}\cdots\sigma_{k_1}(\varepsilon),T}} e^{-2\pi i \lambda\langle \sigma_{k_m}\cdots\sigma_{k_1}(\varepsilon), u \rangle} \partial_{u_{k_1'}}\cdots \partial_{u_{k_\ell'}}G_{\sigma_{k_m} \cdots \sigma_{k_1} (\varepsilon)}(u)du\right|\\
	& \le \int_{\Delta_{\sigma_{k_m}\cdots\sigma_{k_1}(\varepsilon),T}} \big| \partial_{u_{k_1'}}\cdots \partial_{u_{k_\ell'}}G_{\sigma_{k_m} \cdots \sigma_{k_1} (\varepsilon)}(u) \big| du,
	\label{I[sigma,G]<int}
\end{align}
which involves a $2q-m$ iterated integral on the partial simplex $\Delta_{\sigma_{k_m}\cdots\sigma_{k_1}(\varepsilon),T}$ with $m$ coordinates removed, as defined in \eqref{partial:simplex}.
According to Lemma \ref{lem:variables}, the integrand in \eqref{I[sigma,G]<int} depends only on $I$ variables $u_{j_1},\dots,u_{j_I}$, which are relabelled as $s_1,\dots,s_I$ (see \eqref{j_1-j_I} and \eqref{E:sigmaG:s} above), and does not depend on the remaining $2q-m-I$ variables.
Integrating out those $2q-m-I$ variables and keeping in mind that we are integrating on a simplex, we obtain
\begin{align*}
	|\mathcal{I}[\sigma; \varepsilon, G_\varepsilon]| 
	\le T^{2q-m-I} \int_{0<s_1<\cdots<s_I<T} \big| \partial_{s_{p_1}}\cdots \partial_{s_{p_\ell}}G_{\sigma_{k_m} \cdots \sigma_{k_1} (\varepsilon)}(s) \big| ds.
\end{align*}
We can then apply the Fa\`{a} di Bruno formula in Lemma \ref{lem:fdb} to finish the proof.
\end{proof}

\subsection{Estimates for the variance function and its derivatives}
In the previous section, we have reduced our main estimate to a multiple integral involving the function $g_a$ in \eqref{g_a} and its derivatives.
In this section we gather some basic facts about the singularity of those functions.
We start by recalling the expression of some derivatives in the lemma below.

\begin{lemma}\cite[Lemma 6.7]{LL25}
	Let $H \in (0,1)$.
	Let $0<s_1<\cdots<s_I$ and $a = (a_1,\dots,a_I) \in \R^I$ with $\sum_{i=1}^I a_i = 0$. Then:
	\begin{enumerate}
	\item[(i)] For any $i \in \{1,\dots, I\}$,
	\begin{align}\label{E:dg}
		\partial_{s_i} g_a(s) = 2H \sum_{j=1}^{i-1} a_i a_j (s_i-s_j)^{2H-1} - 2H \sum_{j=i+1}^I a_i a_j (s_j-s_i)^{2H-1}.
	\end{align}
	\item[(ii)] For any $i<j$ in $\{1,\dots, I\}$,
	\begin{align}\label{E:ddg}
		\partial_{s_i} \partial_{s_j} g_a(s) = 2H(1-2H) a_i a_j (s_j-s_i)^{2H-2}.
	\end{align}
	\item[(iii)] If $k \ge 3$, then for all $i_1<\cdots<i_k$ in $\{1,\dots, I\}$,
	\begin{align}\label{E:d...dg}
		\partial_{s_{i_1}} \cdots \partial_{s_{i_k}} g_a(s) = 0.
	\end{align}
	\end{enumerate}
\end{lemma}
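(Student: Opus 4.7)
The plan is to exploit the constraint $\sum_{i=1}^{I} a_i = 0$ in order to obtain a very explicit, pair-wise expression for $g_a$, after which the three formulas follow by elementary differentiation.

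First I would expand the variance using bilinearity and the covariance~\eqref{fBm:cov}:
\begin{align*}
\mathrm{Var}\Big(\sum_{i=1}^{I} a_i B_{s_i}\Big)
= \sum_{i,j=1}^{I} a_i a_j\, \tfrac12\bigl(s_i^{2H}+s_j^{2H}-|s_i-s_j|^{2H}\bigr).
\end{align*}
Because $\sum_i a_i=0$, the two terms $\sum_{i,j} a_i a_j\, s_i^{2H}$ and $\sum_{i,j} a_i a_j\, s_j^{2H}$ both vanish. Using symmetry of $|s_i-s_j|^{2H}$ in $(i,j)$ and the ordering $s_1<\cdots<s_I$, this yields the closed form
\begin{align}\label{eq:gexplicit}
g_a(s) \;=\; -\mathrm{Var}\Big(\sum_{i=1}^{I} a_i B_{s_i}\Big) \;=\; \sum_{1\le j<k\le I} a_j a_k (s_k-s_j)^{2H}.
\end{align}

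Next I would prove (i) by differentiating \eqref{eq:gexplicit} in $s_i$. Only pairs $(j,k)$ containing $i$ contribute: the terms with $k=i$ produce $+2H\, a_i a_j (s_i-s_j)^{2H-1}$ for $j<i$, and the terms with $j=i$ produce $-2H\, a_i a_k (s_k-s_i)^{2H-1}$ for $k>i$. Summing gives~\eqref{E:dg}. For (ii), starting from \eqref{E:dg} and differentiating in $s_j$ with $j>i$, only the single summand $-2H\, a_i a_j (s_j-s_i)^{2H-1}$ depends on $s_j$, and its derivative equals $2H(1-2H)\, a_i a_j (s_j-s_i)^{2H-2}$, which is~\eqref{E:ddg}.

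Finally, (iii) is immediate from the structure of~\eqref{eq:gexplicit}: each summand depends on exactly two of the variables $s_1,\ldots,s_I$. Consequently, after the two derivatives $\partial_{s_{i_1}}\partial_{s_{i_2}}$ the result depends only on $s_{i_1}$ and $s_{i_2}$ (as already seen in~\eqref{E:ddg}), so any further derivative $\partial_{s_{i_3}}$ with $i_3\notin\{i_1,i_2\}$ annihilates it. There is no real obstacle here; the only delicate point is the application of $\sum_i a_i=0$ to obtain the pair-wise form~\eqref{eq:gexplicit}, and the accounting of signs coming from the ordering $s_i<s_j$ when $i<j$.
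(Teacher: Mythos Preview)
Your proof is correct and is the standard argument: using $\sum_i a_i=0$ to kill the one-variable terms and reduce $g_a$ to the pairwise form~\eqref{eq:gexplicit}, after which (i)--(iii) are straightforward differentiations. The paper itself does not give a proof of this lemma; it merely cites it from~\cite{LL25}, so there is nothing to compare against, but your approach is exactly the natural one and would be what is done in~\cite{LL25}.
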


\begin{remark}
Recall that we have chosen to apply operations $\sigma_j \in \Sigma_j$ only for the odd indices $j = 1, 3, \dots, 2q-1$.
It follows that the variables being differentiated in \eqref{E:sigmaG} must be a subset of $u_1, u_3, \dots, u_{2q-1}$ and each of those variables is differentiated at most once, so the corresponding differentiation variables $s_{p_1}, \dots, s_{p_\ell}$ introduced in Lemma \ref{lem:variables} are all non-repeating. 
Therefore, the mixed derivatives in \eqref{E:d...dg} of order 3 or higher are all vanishing and this is why in the Fa\`{a} di Bruno formula in Lemma \ref{lem:fdb} we only have terms that involve $B \in P$ with the restriction $\# B \le 2$.
\end{remark}


With the expressions for $\partial_{s_{i_1}} \cdots \partial_{s_{i_k}} g_a$ in hand, we now turn to some estimates for the derivatives of $g_a$ when $H<1/2$.
Note that the estimates are different from those in \cite{LL25} where $H>1/2$.

\begin{lemma}\label{lem:dg}
	Let $H\in (0,1/2)$. Let $0<s_1<\cdots<s_I$ and $a = (a_1,\dots,a_I) \in \R^I$ with $\sum_{i=1}^I a_i = 0$. 
	Recall that the function $g_a$ is defined by \eqref{g_a}.
	Then:
	\begin{enumerate}
	\item 
	For any $i \in \{1,\dots, I\}$,
	\begin{align}\label{dg}
		|\partial_{s_i}g_a(s)| \le \|a\|_\infty^2\, I \left[ (s_i-s_{i-1})^{2H-1}{\bf 1}_{\{i \ge 2\}} + (s_{i+1}-s_i)^{2H-1} {\bf 1}_{\{i \le I-1\}} \right].
\end{align}
	\item 
	For any $i<j$ in $\{1,\dots, I\}$,
	\begin{align}\label{ddg}
		|\partial_{s_i} \partial_{s_j} g_a(s)| \le \|a\|_\infty^2 (s_j-s_i)^{2H-2}.
\end{align}
	\end{enumerate}
\end{lemma}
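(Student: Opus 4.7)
The plan is to derive both estimates directly from the explicit formulas \eqref{E:dg} and \eqref{E:ddg} from the preceding lemma, making crucial use of the fact that $2H-1<0$ when $H\in(0,1/2)$, so that the map $t\mapsto t^{2H-1}$ is strictly decreasing on $(0,\infty)$. This monotonicity is precisely the feature which distinguishes the current regime from the case $H\ge 1/2$ treated in \cite{LL25}, and it is what allows the ``largest-gap'' terms $(s_i-s_{i-1})^{2H-1}$ and $(s_{i+1}-s_i)^{2H-1}$ to control the entire sum in \eqref{E:dg}.

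For the second bound \eqref{ddg}, the argument is immediate: taking absolute values in \eqref{E:ddg} gives $|\partial_{s_i}\partial_{s_j} g_a(s)|=2H(1-2H)|a_ia_j|(s_j-s_i)^{2H-2}$. Using $|a_ia_j|\le \|a\|_\infty^2$, together with the elementary calculus fact that $2H(1-2H)\le 1/4\le 1$ for all $H\in(0,1/2)$, yields \eqref{ddg} directly.

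For the first bound \eqref{dg}, I would start from \eqref{E:dg} and split the absolute value between the ``left'' sum $\sum_{j=1}^{i-1}$ and the ``right'' sum $\sum_{j=i+1}^{I}$. For $j\le i-1$, the gap $s_i-s_j$ is at least $s_i-s_{i-1}$; since $x\mapsto x^{2H-1}$ is decreasing, each term is bounded by $\|a\|_\infty^2(s_i-s_{i-1})^{2H-1}$, and summing over the $i-1$ indices produces a factor $i-1$. Symmetrically, for $j\ge i+1$ the gap $s_j-s_i$ is at least $s_{i+1}-s_i$, yielding a factor $I-i$ times $(s_{i+1}-s_i)^{2H-1}$. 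Using the crude but sufficient estimates $2H(i-1)\le I$ and $2H(I-i)\le I$ (which follow from $2H<1$ and $i-1,\,I-i\le I$), one collects everything into the single prefactor $\|a\|_\infty^2\,I$ and recovers \eqref{dg}, with the indicator functions accounting for the edge cases $i=1$ and $i=I$. There is no real obstacle in the argument; the only point requiring care is the monotonicity step, which fails for $H\ge 1/2$ and explains why a separate treatment is needed in the rough regime.
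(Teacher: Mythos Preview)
Your proposal is correct and follows essentially the same approach as the paper: both arguments take absolute values in \eqref{E:dg}, use the monotonicity of $t\mapsto t^{2H-1}$ (valid because $2H-1<0$) to replace every gap by the nearest-neighbour gap, and then crudely bound the number of terms and the prefactor $2H$ to obtain the constant $\|a\|_\infty^2\,I$; likewise \eqref{ddg} is read off directly from \eqref{E:ddg} together with $2H(1-2H)\le 1$. The only cosmetic difference is that the paper absorbs $2H\le 1$ at the outset and then bounds the number of summands by $I$, whereas you keep the $2H$ and bound $2H(i-1)\le I$, $2H(I-i)\le I$ at the end.
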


\begin{proof}
For $i = 1$ or $I$, we may use \eqref{E:dg} and monotonicity (since $2H-1<0$) to deduce that
\begin{align*}
	|\partial_{s_i}g_a(s)|\le \begin{cases}
	 \|a\|_\infty^2 \sum_{j=2}^I  (s_j-s_1)^{2H-1}
	\le  \|a\|_\infty^2\, I  (s_2-s_1)^{2H-1} & \text{if $i=1$,}\\
	\|a\|_\infty^2 \sum_{j=1}^{I-1} (s_I-s_j)^{2H-1} \le \|a\|_\infty^2 \, I (s_I-s_{I-1})^{2H-1} & \text{if $i=I$.}
	\end{cases}
\end{align*}
Similarly, for $2\le i \le I-1$,
\begin{align*}
	|\partial_{s_i}g_a(s)| &\le \|a\|_\infty^2 \sum_{j=1}^{i-1} (s_i-s_j)^{2H-1} + \|a\|_\infty^2 \sum_{j=i+1}^I  (s_j-s_i)^{2H-1}\\
	& \le \|a\|_\infty^2\, I (s_i-s_{i-1})^{2H-1} + \|a\|_\infty^2\, I  (s_{i+1}-s_i)^{2H-1}.
\end{align*}
This proves \eqref{dg}. 
Finally, \eqref{ddg} follows immediately from \eqref{E:ddg} and $H \in (0,1/2)$.
\end{proof}

Let us also recall a lower bound on the variance function obtained in \cite[Proposition 7.6]{LL25} using the strong local nondeterminism property of fractional Brownian motion.

\begin{lemma}\cite[Proposition 7.6]{LL25}\label{lem:var}
Let $H \in (0,1)$ and consider a fractional Brownian motion $B$ with Hurst parameter $H$. There exists a constant $C_H>0$ depending only on $H$ such that for all integers $n \ge 1$, for all $0<t_1<\cdots<t_n$, and for all $a_1, \dots, a_n \in \R$
\begin{align*}
	\mathrm{Var}\left( \sum_{j=1}^n a_j B_{t_j}\right) \ge \frac{C_H^{n-1}}{n} \sum_{j=1}^n (a_j+\cdots+a_n)^2 (t_j-t_{j-1})^{2H},
\end{align*}
where we have used the convention $t_0=0$.
\end{lemma}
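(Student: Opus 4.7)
Plan. Via the Abel summation identity $\sum_{j=1}^n a_j B_{t_j} = \sum_{j=1}^n S_j\,\xi_j$ with $\xi_j = B_{t_j} - B_{t_{j-1}}$ (and $B_{t_0}=0$) and $S_j = a_j+\cdots+a_n$, the claim is equivalent to
\begin{align*}
\mathrm{Var}\Big(\sum_{j=1}^n c_j \xi_j\Big) \ge \frac{C_H^{n-1}}{n}\sum_{j=1}^n c_j^2\,(t_j-t_{j-1})^{2H}
\end{align*}
for arbitrary $c_1,\dots,c_n\in\R$ (the choice $c_j=S_j$ recovering the original statement). The main analytic input is Pitt's strong local non-determinism (SLND) for fBm: there exists $c_H\in(0,1]$ such that
\begin{align*}
\mathrm{Var}(B_{s_n}\mid B_{s_1},\dots,B_{s_{n-1}}) \ge c_H (s_n-s_{n-1})^{2H}
\end{align*}
for every $0<s_1<\cdots<s_n$.

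I would prove the reformulated inequality by induction on $n$. The base case $n=1$ reduces to $\mathrm{Var}(c_1 B_{t_1}) = c_1^2 t_1^{2H}$, which matches the right-hand side with the convention $C_H^0 = 1$. For the inductive step, set $\mathcal{G}_{n-1} = \sigma(B_{t_1},\dots,B_{t_{n-1}})$ and write the Gaussian orthogonal decomposition $B_{t_n} = \E[B_{t_n}\mid\mathcal{G}_{n-1}] + Z_n$, where $Z_n\perp\mathcal{G}_{n-1}$ and $\mathrm{Var}(Z_n)\ge c_H(t_n-t_{n-1})^{2H}$ by SLND. Splitting $Y := \sum_{j=1}^n c_j\xi_j = Y' + c_n Z_n$ with $Y'\in L^2(\mathcal{G}_{n-1})$ and $Y'\perp Z_n$ yields
\begin{align*}
\mathrm{Var}(Y) = \mathrm{Var}(Y') + c_n^2\,\mathrm{Var}(Z_n) \ge \mathrm{Var}(Y') + c_H\, c_n^2 (t_n-t_{n-1})^{2H},
\end{align*}
which extracts the last term of the target sum; the inductive hypothesis applied to the linear combination $Y'$ of $B_{t_1},\dots,B_{t_{n-1}}$ on $n-1$ points should then account for the remaining terms.

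The main technical obstacle is that the projection step re-weights the coefficients in the $\xi_j$-basis: if one writes $Y' = \sum_{j=1}^{n-1}\tilde c_j\xi_j$, then $\tilde c_j = c_j + c_n\beta_j$ where the $\beta_j$'s come from the regression of $\xi_n$ onto $\mathrm{span}(\xi_1,\dots,\xi_{n-1})$. When $H=1/2$ the Markov property forces $\beta_j = \mathbf{1}_{\{j=n-1\}}$ so that the tail-sum structure is preserved and the induction closes cleanly with constant $c_H$; for $H\in(0,1/2)$ no such cancellation occurs, and the $\tilde c_j$ may differ substantially from $c_j$. My strategy to absorb this discrepancy is to avoid a single ``peel-off-the-last'' step and proceed more symmetrically: for each $k\in\{1,\dots,n\}$ one conditions on $\mathcal{G}_{k-1}$ to isolate the $k$-th innovation, obtaining a parallel lower bound; averaging the $n$ resulting inequalities produces the $\frac{1}{n}$ prefactor, while the $n-1$ nested invocations of SLND in the recursion account for the $C_H^{n-1}$ factor. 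Calibrating the symmetrization so that the averaged inductive hypothesis closes on the same $c_j^2\,(t_j-t_{j-1})^{2H}$ quantity, and controlling the cross-terms $c_n\beta_j$ via a weighted Cauchy--Schwarz estimate, is the most delicate part of the argument.
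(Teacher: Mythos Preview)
The paper does not prove this lemma; it is quoted verbatim from \cite[Proposition 7.6]{LL25}, so there is no in-paper argument to compare against. I will therefore assess your proposal on its own.

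Your Abel summation step and the reformulation in terms of the increments $\xi_j=B_{t_j}-B_{t_{j-1}}$ and arbitrary coefficients $c_j$ are correct and are the natural starting point. You also correctly diagnose why a naive ``peel off the last term and induct'' argument fails: projecting $\xi_n$ onto $\mathrm{span}(\xi_1,\dots,\xi_{n-1})$ replaces $c_j$ by $\tilde c_j=c_j+c_n\beta_j$, and there is no mechanism to compare $\sum_j\tilde c_j^{2}(t_j-t_{j-1})^{2H}$ with $\sum_j c_j^{2}(t_j-t_{j-1})^{2H}$.

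The genuine gap is in your proposed repair. Conditioning on $\mathcal G_{k-1}=\sigma(B_{t_1},\dots,B_{t_{k-1}})$ does \emph{not} ``isolate the $k$-th innovation'': all of $\xi_k,\xi_{k+1},\dots,\xi_n$ remain random given $\mathcal G_{k-1}$, so $\mathrm{Var}(Y\mid\mathcal G_{k-1})$ is not of the form $c_k^{2}\cdot(\text{something})$. Averaging these $n$ inequalities therefore does not produce the sum $\sum_k c_k^{2}(t_k-t_{k-1})^{2H}$, and the vague Cauchy--Schwarz control of cross terms cannot close this.

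The fix is to change the conditioning. For each $k$ condition on \emph{all} other increments: since only $c_k\xi_k$ is then random,
\[
\mathrm{Var}(Y)\;\ge\;c_k^{2}\,\mathrm{Var}\!\big(\xi_k\mid \xi_j,\,j\ne k\big)
\;=\;c_k^{2}\,\frac{\det G}{\det G_{-k,-k}},
\]
where $G=(\E[\xi_i\xi_j])_{i,j}$. The one-sided SLND you already invoke gives $\det G=\prod_{j}\mathrm{Var}(B_{t_j}\mid B_{t_1},\dots,B_{t_{j-1}})\ge c_H^{\,n-1}\prod_j(t_j-t_{j-1})^{2H}$, while Hadamard's inequality gives $\det G_{-k,-k}\le\prod_{j\ne k}(t_j-t_{j-1})^{2H}$. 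Hence $\mathrm{Var}(\xi_k\mid\xi_j,\,j\ne k)\ge c_H^{\,n-1}(t_k-t_{k-1})^{2H}$, and averaging the resulting $n$ inequalities over $k$ yields exactly $\frac{c_H^{\,n-1}}{n}\sum_k c_k^{2}(t_k-t_{k-1})^{2H}$. This accounts simultaneously for the exponent $n-1$ and the factor $1/n$, with no induction and no cross terms to control.
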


\section{Proof of Theorem \ref{th:main}}

Recall the quantity $\mathcal{I}[\sigma; \varepsilon, G_\varepsilon]$ defined in \eqref{I[sigma,G]}.
The proof of our main theorem relies on the following key estimate.

\begin{lemma}\label{lem:I:UB}
There exists $C < \infty$ depending only on $H$ such that
\begin{align*}
	|\mathcal{I}[\sigma; \varepsilon, G_\varepsilon]| \le C^{q^2} T^q
\end{align*}
uniformly for all $q \in \N_+$, for all $\sigma \in \Sigma(q)$, for all $\varepsilon \in \mathcal{A}_{2q}$, for all $T>0$, and for all $\lambda >0$.
\end{lemma}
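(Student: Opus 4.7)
The plan is to reduce via Proposition \ref{pr:I[sigma,G]} to showing that, for each admissible partition $P\in\mathscr{P}_2\{p_1,\ldots,p_\ell\}$,
\begin{equation*}
J_P := \int_{0<s_1<\cdots<s_I<T} e^{\pi g_a(s)} \prod_{B\in P}|g_a^{(B)}(s)|\,ds \le C^{q^2}\,T^{I-\ell}.
\end{equation*}
Indeed, using $m+\ell=q$ from \eqref{m+l=q} and the crude combinatorial estimate $|\mathscr{P}_2\{p_1,\ldots,p_\ell\}|\le C^{\ell}$, the prefactor $T^{2q-m-I}$ in Proposition \ref{pr:I[sigma,G]} will combine with $T^{I-\ell}$ to produce the desired $T^q$. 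To estimate $J_P$, I would pass to the increment coordinates $v_j:=s_j-s_{j-1}$ (with $s_0=0$) and invoke Lemma \ref{lem:var} to bound
\begin{equation*}
e^{\pi g_a(s)}\le \exp\Bigl(-c_{H,I}\sum_{j=1}^I b_j^2\,v_j^{2H}\Bigr),\qquad b_j:=a_j+\cdots+a_I.
\end{equation*}
Because the $a_i$'s are nonzero integers with $|a_i|\le 3$, the ``bound'' set $D:=\{j:b_j\ne 0\}$ enjoys genuine sub-Gaussian decay (with $|b_j|\ge 1$), while on the ``free'' set $F:=\{1,\ldots,I\}\setminus D$ we simply use $v_j\le T$.

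The derivative factors would be controlled through Lemma \ref{lem:dg}: a singleton $B=\{p_i\}$ contributes kernels of type $v^{2H-1}$, locally integrable since $2H-1>-1$; a pair $B=\{p_i,p_j\}$ contributes $(v_{p_i+1}+\cdots+v_{p_j})^{2H-2}$, which is the main obstacle because $2H-2<-1$ prevents this kernel from being integrable in any single variable. The resolution relies crucially on Lemma \ref{lem:p_i}: consecutive $p_r$'s differ by at least $2$, so every pair spans $n:=p_j-p_i\ge 2$ distinct increments. This makes the splitting
\begin{equation*}
(v_{p_i+1}+\cdots+v_{p_j})^{2H-2}\le v_{k_1}^{H-1}\,v_{k_2}^{H-1},\qquad k_1\ne k_2 \text{ in } \{p_i+1,\ldots,p_j\},
\end{equation*}
admissible, with each $v^{H-1}$ locally integrable since $H-1>-1$. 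The $k_1,k_2$ will be chosen so as to avoid overloading any single increment with incompatible singular factors; this is always feasible thanks to Lemma \ref{lem:p_i} and its consequence $p_r+1\ne p_{r'}$ for $r\ne r'$, which guarantees that singleton attribution points $\{p_r,p_r+1\}$ never coincide with the distinguished endpoints of any pair span, and that overlapping pair spans leave disjoint ``private'' regions of size at least $2$.

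The last step is the $T$-exponent bookkeeping. Each free variable contributes at most a factor $T$, reduced to $T^{2H}$ or $T^H$ when decorated by a singleton or a pair kernel, respectively, while each bound variable contributes only a universal constant even after decoration. After a careful case analysis of the allocation, the net $T$-exponent accumulated by $J_P$ is bounded by $|F|-(\ell-|D|)_+$, which is $\le I-\ell$ since $|F|=I-|D|$. The hardest part is precisely this combinatorial allocation: when the number $\ell$ of differentiations exceeds the number $|D|$ of decay directions, the deficit $\ell-|D|$ has to be paid in full by savings of order $1-2H$ per singleton attributed to $F$ and $(2-2H)/n$ per pair with free intermediate increments, and organizing the assignments so that singular factors remain integrable and the $T$-powers tally correctly is the main technical difficulty. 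Once this combinatorial scheme is implemented, the bound $J_P\le C^{q^2}T^{I-\ell}$ follows, which in turn yields the lemma.
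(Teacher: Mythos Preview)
Your approach is correct in outline and genuinely different from the paper's. The paper also starts from Proposition~\ref{pr:I[sigma,G]} and the local-nondeterminism bound of Lemma~\ref{lem:var}, but instead of decoupling into increment variables it runs an \emph{iterative} scheme on the simplex: first integrate $s_I$, then peel off two variables $s_{i-1},s_i$ at a time from the right, with a case analysis according to the types $(\tau(s_{i-1}),\tau(s_i))$. The dangerous pair kernel $(s_p-s_{p'})^{2H-2}$ is tamed not by your AM--GM split but by integrating the outer variable first: $\int_{s_{p-1}}^T(s_p-s_{p'})^{2H-2}\,ds_p\lesssim(s_{p-1}-s_{p'})^{2H-1}\le(s_{p'+1}-s_{p'})^{2H-1}$, which is integrable in $s_{p'+1}$ since $p'+1<p$ is a non-differentiation variable by Lemma~\ref{lem:p_i}. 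Your route is arguably cleaner once the allocation is nailed down; the paper's route trades that combinatorial lemma for a longer but explicit case enumeration.

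Two remarks on your sketch. First, the allocation you claim (``always feasible thanks to Lemma~\ref{lem:p_i}'') does work, and a concrete scheme is: for a singleton $\{p_r\}$ take whichever of $v_{p_r},v_{p_r+1}$ the expansion of Lemma~\ref{lem:dg} dictates; for a pair $\{p_r,p_{r'}\}$ with $r<r'$ take the two span endpoints $v_{p_r+1}$ and $v_{p_{r'}}$. Since the intervals $\{p_r,p_r+1\}$ are pairwise disjoint (gaps $\ge 2$) and no $p_{r'}$ equals any $p_r+1$, all assigned variables are distinct. Second, your worry about the regime $\ell>|D|$ is vacuous: property~(iv) (no two consecutive indices in $F$) together with $I\in D$ forces $|D|\ge\lceil I/2\rceil$, while Lemma~\ref{lem:p_i} and property~(iii) give $\ell\le\lfloor I/2\rfloor$, hence $|D|\ge\ell$ always. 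This immediately yields $T$-exponent $\le|F|\le I-\ell$ for $T\ge 1$; for $T\le 1$ one bounds the exponential by $1$ and the exponent becomes $I-\ell+2H(\#P)\ge I-\ell$, which is the right direction. You should make this small-$T$/large-$T$ split explicit, as the paper does.
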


\begin{proof}
Fix $q, \sigma, \varepsilon, T, \lambda$ as in the statement of the lemma.
Recall from Proposition \ref{pr:I[sigma,G]} that we can bound \eqref{I[sigma,G]} as follows:
\begin{align}\label{I[sigma,G]:bd}
	|\mathcal{I}[\sigma; \varepsilon, G_\varepsilon]|
	\le T^{2q-m-I} \sum_{P \in \mathscr{P}_2\{p_1,\dots, p_\ell\}} \pi^{\# P} \int_{0<s_1<\cdots<s_I<T} \exp(\pi g_a(s)) \prod_{B \in P}|g_a^{(B)}(s)| ds.
\end{align}
We now handle the right hand side of \eqref{I[sigma,G]:bd}, dividing our proof in several steps.

\noindent
{\it Step 1: Reduction to a singularity analysis.}
Recall that $\ep$ is an element of the set $\mathcal{A}_{2q}$ defined by~\eqref{A}, which means that $\sum_{j=1}^{2q}\ep_j = 0$.
This null average property is easily seen to be preserved by the operations $\Phi^\mp_j$ introduced in Definition \ref{def:Phi-j}.
Therefore, we also have $\sum_{j=1}^I a_j = 0$, where $(a_1,\dots,a_I)$ is defined by \eqref{j_1-j_I}.
Starting from this elementary observation, we define $1=i_1<\cdots<i_d < I$ such that
\begin{align}\label{J*}
	a_i+a_{i+1}+\cdots+a_I \begin{cases}
	=0 & \text{for $i\in \{i_1,\dots,i_d\}$,}\\
	\ne 0 & \text{for $i \in J^* := \{1,\dots, I\}\setminus\{i_1,\dots,i_d\}$.}
	\end{cases}
\end{align}
By Lemma \ref{lem:var} combined with \eqref{J*}, only the indices in $J^{*}$ will contribute to a proper upper bound for the exponential terms in $\mathcal{I}[\sigma; \varepsilon, G_\varepsilon]$. Specifically,
there exists a constant $0<K<1$ depending only on $H$ such that
\begin{align}\label{SLND}
	\exp(\pi g_a(s)) = \exp\left\{-\pi \mathrm{Var}\left( \sum_{i=1}^I a_i B(s_i) \right)\right\}
	\le \exp\left\{ - K^q \sum_{i \in J^*} (s_i-s_{i-1})^{2H} \right\}
\end{align}
uniformly for all $\varepsilon \in \mathcal{A}_{2q}$, $\sigma \in \Sigma(q)$ and $0<s_1<\cdots<s_I<T$.
Then, applying \eqref{SLND} and Lemma \ref{lem:dg} to \eqref{I[sigma,G]:bd} yields
\begin{align}\begin{split}\label{I:UB}
	&|\mathcal{I}[\sigma; \varepsilon, G_\varepsilon]|\\
	&\le (18I\pi)^q T^{2q-m-I} \sum_{P \in \mathscr{P}_2} \sum_{(\theta_1,\dots, \theta_I) \in \Theta(P)} \int_{0<s_1<\cdots<s_I<T} \exp\left\{- K^q \sum_{i \in J^*} (s_i-s_{i-1})^{2H}\right\} \\
	& \qquad \quad \times \prod_{B \in P: \# B = 2, B = \{j,i\}, j<i} (s_i-s_{j})^{2H-2} 
	\prod_{B \in P : \# B = 1, B = \{i\}} |s_{i}-s_{i +\theta_i}|^{2H-1}ds,
\end{split}\end{align}
where $\mathscr{P}_2 = \mathscr{P}_2\{p_1,\dots, p_\ell\}$ is introduced in Lemma \ref{lem:fdb}
and $\Theta(P)$ describes the set of indices affected by the differentiation rule \eqref{dg}, namely,
\begin{align}\begin{split}\label{Theta(P)}
	\Theta(P) = \big\{ (\theta_1,\dots,\theta_I) \in \{-1,0,1\}^I : \ &\theta_i = 0 \text{ if } \{i\} \not\in P;\\
	&\theta_i \in \{-1,1\} \text{ if } \{i\} \in P \setminus \{1, I\}\,; 
	\\
	&\theta_1 = 1 \text{ if } \{1\} \in P\,; \theta_I = -1 \text{ if } \{I\} \in P \big\}.
\end{split}\end{align}
From \eqref{I:UB} we see that the value of $\theta_i$ is not important if $\{i\} \not\in P$; in this case, we have set $\theta_i = 0$ for convenience.
In \eqref{I:UB}, the factor $\pi^q$ comes from the factor $\pi^{\# P}$ in \eqref{I[sigma,G]:bd} and the elementary bound $\# P \le \ell \le q$ (see \eqref{m+l=q} above); the factor $(18I)^q$ comes from the derivative bounds in Lemma \ref{lem:dg}, the bound $\|a\|_\infty \le 3$ (see \eqref{alpha} and \eqref{j_1-j_I} above), together with the fact that the number of factors in the product $\prod_{B \in P} (\cdots)$ in \eqref{I[sigma,G]:bd} is equal to $\# P$ which is again at most $q$.

In order to handle the right hand side of \eqref{I:UB}, we need some extra notation.
That is, we define $U = U(I,P,J^*,\theta)$ as the integral
\begin{align}\label{D:U}
	U(I, P, J^*, \theta)&=\int_{0<s_1<\cdots<s_I<T} \exp\left\{- K^q \sum_{i \in J^*} (s_i-s_{i-1})^{2H}\right\}\\ 
	&\quad \times \prod_{B \in P: \# B = 2, B = \{j,i\}, j<i} (s_i-s_{j})^{2H-2}
	\prod_{B \in P : \# B = 1, B = \{i\}} |s_{i}-s_{i +\theta_i}|^{2H-1}ds.\notag
\end{align}
Note that $U$ depends on the number of active variables $I$, plus the following parameters:
\begin{enumerate}[label=\textbf{(\alph*)}]
\item $P$ is a generic collection of disjoint subsets of $\{ 1, \dots, I\}$ such that $\# B = 1$ or $2$ for all $B \in P$ and $\sum_{B \in P} \# B = \ell$;
\item $J^*$ is a subset of $\{1,\dots, I\}$;
\item $\theta = (\theta_1,\dots,\theta_I)$ is a tuple in the set $\Theta(P)$ introduced in \eqref{Theta(P)}.
\end{enumerate}
 With this notation in hand, one can recast relation~\eqref{I:UB} as
\begin{equation}\label{c1}
|\mathcal{I}[\sigma; \varepsilon, G_\varepsilon]|
	\le 
	(18I\pi)^q T^{2q-m-I} \sum_{P \in \mathscr{P}_2} \sum_{(\theta_1,\dots, \theta_I) \in \Theta(P)} 
	U(I,P,J^*,\theta) .
\end{equation}
In the sequel, our goal is to show that
\begin{align}\label{U:UB}
	U(I, P, J^*, \theta) \le C^{q} T^{I-\ell}
\end{align}
for some $C<\infty$ depending only on $H$. The proof of this relation is detailed below.

\smallskip

\begin{example}\label{ex:I:bd}
In the situation described in Examples \ref{ex:b5} and \ref{ex:b6}, we have seen that the sum in \eqref{I[sigma,G]:bd} is a sum over $\mathscr{P}_2\{1,3\}$ (this set being given by \eqref{b7}). Next, referring to the variables $a$ in Figure \ref{fig:f2}, the set $J^* \subset \{1,2,3,4\}$ is $J^* = \{2,4\}$.
Moreover, recalling that for $P \in \mathscr{P}_2\{1,3\}$, $\Theta(P)$ is given by \eqref{Theta(P)} and $\theta_i = 0$ if $\{i\} \not\in P$, we have
\begin{align*}
	\Theta\left( \big[ \{1,3\} \big] \right) = \{(0,0,0,0)\} \, ,
	\quad \text{and}\quad
	\Theta\left( \big[ \{1\}, \{3\} \big] \right) = \{ (1,0,\theta_3,0) : \theta_3 = \pm 1 \}.
\end{align*}
Therefore, relation \eqref{c1} reads
\begin{align}\begin{split}\label{I<U+U+U}
	|\mathcal{I}[\sigma; \ep, G_\ep]| 
	&\le (18 I \pi)^9 T^{2q-m-I}\\
	&\quad\times\Big[U\left(I=4, P = \big[\{1,3\}\big], J^* = \{2,4\}, \theta = (0,0,0,0)\right)\\
	&\qquad+U\left(I=4, P=\big[\{1\},\{3\}\big], J^* = \{2,4\}, \theta = (1,0,1,0)\right)\\
	&\qquad+U\left(I=4, P=\big[\{1\},\{3\}\big], J^*=\{2,4\}, \theta = (1,0,-1,0) \right)\Big].
\end{split}\end{align}
For the sake of clarity, let us also spell out one of the $U$ terms in \eqref{I<U+U+U}.
That is, we have
\begin{align}\begin{split}\label{U:ex}
	&U\left(I=4, P = \big[\{1,3\}\big], J^* = \{2,4\}, \theta = (0,0,0,0)\right)\\
	&= \int_{0<s_1<\cdots<s_4 <T} \exp\left\{ - K^4\left[ (s_2-s_1)^{2H}+(s_4-s_3)^{2H} \right] \right\} (s_3-s_1)^{2H-2} ds_1 \cdots ds_4,
\end{split}\end{align}
and we let the reader check how to write the other terms in \eqref{I<U+U+U}.
\end{example}

\noindent
{\it Step 2: Further remarks on our transformations $\sigma$:}
Before getting into the bulk of our estimates,
let us remind ourselves some important properties:
\begin{enumerate}[label=\textbf{(\roman*)}]

\item\label{it:sigma-i}
By Lemma \ref{lem:variables}, if $\sigma_j = \partial_{u_{j}}$ (where $j$ is odd), then the ${j}^{\text{th}}$ entry of $\sigma_{k_m}\cdots\sigma_{k_1}(\varepsilon)$ is non-trivial and $u_{j}$ corresponds to some non-trivial variable $s_i$.
In this case, we use the notation $\tau(s_i)=\partial$ to indicate that $s_i$ is a differentiation variable, and use $\tau(s_i)=0$ to indicate that $s_i$ is not a differentiation variable.
Note that $P = \{ p \in \{1,\dots,I\} : \tau(s_p) = \partial \}$.

\item\label{it:sigma-ii} 
The differentiation variables are indexed by $s_{p_1},\dots, s_{p_\ell}$. By Lemma \ref{lem:p_i}, $p_{i+1}-p_i \ge 2$ for all $1 \le i < \ell$.

\item\label{it:sigma-iii} 
The last variable $s_I$ is always not a differentiation variable. To see this, let $i$ be the largest index for which $\sigma_{2i-1} = \partial_{u_{2i-1}}$ (if there is no such index, the asserted property
clearly holds). Then the last $q-i$ operations $\sigma_{2j-1}$ ($i+1 \le j \le q$) are $\Phi^\pm_{2j-1}$ operations, which may only affect the last $2q-(2i-1) = 2q-2i+1$ entries of $\varepsilon$, and each $\Phi_{2j-1}^\pm$ operation can trivialize at most two entries of $\varepsilon$ (since the $2j-1^{\text{th}}$ entry becomes $\ast$ and the $2j-1\pm1^{\text{th}}$ entry may or may not become 0).
This means that those $q-i$ operations together can trivialize at most $2q-2i$ entries among the last $2q-2i+1$ entries, so the last non-trivial entry must be found among those entries and corresponds to a non-differentiation variable.
Therefore, $s_I$ is always not a differentiation variable. 

\item\label{it:sigma-iv} 
Recall that $a=(a_1,\dots,a_I)$ denotes the non-zero, non-$\ast$ entries of $\sigma_{k_m}\cdots\sigma_{k_1}(\varepsilon)$. If we had 
\begin{equation*}
a_{i}+a_{i+1}+\cdots+a_{I}=0 ,
\quad\text{and}\quad
a_{i+1}+\cdots+a_{I}=0 \, ,
\end{equation*}
this would trivially imply $a_{i}=0$, which is impossible. Therefore we cannot have $a_{i}+a_{i+1}+\cdots+a_{I}=0$ for two consecutive indices $i$ and $i+1$. This means that for any $i$ we always have either $i \in J^*$ or $i+1 \in J^*$ (or both $i,i+1\in J^{*}$). 
\end{enumerate}

\smallskip

\noindent
{\it Step 3: Estimate for small $T$}.
Consider $0 \le T \le 1$. In this case, we may bound the exponential factor in \eqref{D:U} by 1 to get that
\begin{align}\label{c2}
	U(I, P, J^*, \theta) &\le \int_{0<s_1<\cdots<s_I<T}  \prod_{B=\{p',p\} \in P, \,p'<p} (s_{p}-s_{p'})^{2H-2} 
	\prod_{B=\{p_i\} \in P} |s_{p_i}-s_{p_i+ \theta_i}|^{2H-1}ds.
\end{align}
Next, we integrate $s_I$ first, then $s_{I-1}$, and so on. Furthermore, for a
fixed $P\in \mathscr{P}_2 = \mathscr{P}_2\{p_1,\dots, p_\ell\}$, we define the parameters
\begin{align*}
	&r = r(P):= \# \{ B \in P : \# B = 2\},\\
	&n=n(P) := \# \{ B \in P : \# B = 1 \}=\ell-2r.
\end{align*}
With this notation in hand, the integrals in~\eqref{c2} can be estimated thanks to the following facts:
\begin{enumerate}[label=$\bullet$]
\item If $s_j = s_{p_i}$ with $\{p_i\} \in P$, then the integral can be bounded by a factor of $(2H)^{-1}T^{2H}$ when integrating $s_{p_i}$ (or $s_{p_i+1}$ respectively) if $\theta_i = -1$ (if $\theta_i = 1$ respectively), and there are $n$ such factors.
\item If $s_j = s_p$ with $B = \{p', p\} \in P$ for some $p'<p$, then the singularity $(s_{p}-s_{p'})^{2H-2}$ seems to be a non-trivial obstacle to integrability when $H \in (0,1/2)$. However, recall that for such a $B$ we have seen that
$p' \le p-2$ and $s_{p'+1}$ is not a differentiation variable (by property \ref{it:sigma-ii} above). This means that the diagonal is avoided here, and we have
\begin{align}\label{c2.1}
	\int_{s_{p-1}}^T (s_p-s_{p'})^{2H-2} ds_p \le \frac{1}{1-2H} (s_{p-1}-s_{p'})^{2H-1} \le \frac{1}{1-2H} (s_{p'+1}-s_{p'})^{2H-1}.
\end{align}
Furthermore later, when it comes to integrating $s_{p'+1}$, the factor $(s_{p'+1}-s_{p'})^{2H-1}$ from \eqref{c2.1} will be the only factor that involves $s_{p'+1}$, which is not a differentiation variable, so a simple factor of $(2H)^{-1}(1-2H)^{-1}T^{2H}$ is produced. There are $r$ such factors.
\item If $s_j$ is none of the above cases, 
then a factor of $T$ is produced, and there are $I-\ell$ such factors.
\end{enumerate}
With those considerations in mind, plugging the above estimates into~\eqref{c2}, for $0\le T \le 1$ we have obtained
\begin{align}\label{c3}
	U(I, P, J^*, \theta)
	\le C^{n+r} T^{I-\ell} T^{2H(n+r)}\le C^{q} T^{I-\ell} \, ,
\end{align}
for some $C<\infty$ that depends only on $H$. Otherwise stated, we have proved~\eqref{U:UB} for $0\le T \le 1$.

\smallskip

\noindent
{\it Step 4: Large $T$ estimate, initiating the recursion}.
It remains to show that~\eqref{U:UB} holds for $T \ge 1$.
We estimate $U(I, P, J^*, \theta)$ by first integrating the variable $s_I$, taking now the exponential factor into account.
To this aim, recall the type $\tau(s_i)$ of variables defined in property~\ref{it:sigma-i}, and also recall from property~\ref{it:sigma-ii} that $\tau(s_I) = 0$.
Next from \eqref{J*}, we see that $I \in J^*$ since $a_I \ne 0$. Hence summarizing our considerations on the $s_{I}$ variable, we are left with the following two cases:
\begin{enumerate}[label=(\alph*)]
\item 
If $\tau(s_{I-1}) = \partial$, $\{I-1\} \in P$, and $\theta_{I-1} = 1$, then the variable $s_{I}$ appears in one differentiation in~\eqref{D:U}. 
Therefore we use the bound
\begin{equation}\label{c4}
\int_{s_{I-1}}^T (s_I-s_{I-1})^{2H-1}  e^{-K^q(s_I-s_{I-1})^{2H}} ds_I \le \int_0^\infty s_I^{2H-1}  e^{-K^qs_I^{2H}} ds_I \le C^q;
\end{equation}

\item
In all other situations, the variable  $s_{I}$ is not differentiated. Hence we simply use
\begin{equation}\label{c5}
	\int_{s_{I-1}}^T  e^{-K^q(s_I-s_{I-1})^{2H}} ds_I \le \int_0^\infty  e^{-K^q s_I^{2H}} ds_I \le C^q.
\end{equation}
\end{enumerate}

\noindent
Putting together~\eqref{c4} and~\eqref{c5}, 
it follows that the computation of $U(I, P, J^*, \theta)$ is reduced to an integration on an $(I-1)$-dimensional simplex:
\begin{align}\label{U:I_0}
	U(I,P,J^*,\theta) \le C^q U(I_0, P_0,J_0^*, \theta^{(0)}),
\end{align}
where $C<\infty$ is a constant depending only on $H$ and the new parameters $I_0, P_0,J_0^*, \theta^{(0)}$ are defined as
\begin{align}\begin{split}\label{para:k=0}
&I_0 = I-1, \quad	P_0 = \{ B \in P : B \subset \{1,\dots, I-1\} \}, \quad J^*_0 = \{ j \in J^* : j \le I-1 \},\\
&\text{and} \quad \theta^{(0)} = \begin{cases}
(\theta_1,\dots,\theta_{I-2},0) & \text{if $\theta_{I-1}=1$,}\\
(\theta_1,\dots,\theta_{I-2},\theta_{I-1}) & \text{if $\theta_{I-1} \ne 1$.}
\end{cases}
\end{split}\end{align}
Note that the last entry of $\theta^{(0)}$ is set to be 0 if $\theta_{I-1}=1$ because the factor $(s_I-s_{I-1})^{2H-1}$ is removed after the integration in \eqref{c4}. In particular, $\theta^{(0)}_{I-1}$ is either $-1$ or $0$. 

Next, we continue with the estimate iteratively by integrating two variables at a time and updating $(I_0,P_0,J_0^*, \theta^{(0)})$ after each iteration. Define $\nu$, the number of iterations, by
\begin{align}\label{nu}
	\nu = \begin{cases}
	I_0/2 & \text{if $I_0$ is even;}\\
	(I_0-1)/2 & \text{if $I_0$ is odd.}
	\end{cases}
\end{align}
For any $k \in \{1,\dots, \nu-1\}$, given $(I_{k-1}, P_{k-1}, J_{k-1}^*, \theta^{(k-1)})$, we set $i = I_{k-1}$ and we update the set of parameters by defining
\begin{equation}\label{c6}
I_k = I_{k-1}-2 = i-2,
\quad
L_k=\{ j \in \{i-1, i\} :  \tau(s_j) = \partial \},
\quad 
\ell_k=\# L_k, 
\end{equation}
as well as
\begin{equation*}
	P_k=\{ B \setminus L_k : B \in P_{k-1} \},
	\quad\text{and}\quad
	J^*_k=\{ j \in J^* : j \le I_k \}.
\end{equation*}
Note that $P_k$ is a partition of $P \cap \{1,\dots,I_k\}$.
We also update the parameter $\theta$ by setting
\begin{align}\label{theta:update}
	\theta^{(k)} &= \begin{cases}
	\big(\theta^{(k-1)}_1,\dots,\theta^{(k-1)}_{i-3}, 0\big) & \text{if $\theta^{(k-1)}_{i-2} = 1$;}\smallskip\\
	\big(\theta^{(k-1)}_1,\dots,\theta^{(k-1)}_{i-3}, \theta^{(k-1)}_{i-2}\big) + {\bf e}_j& \text{if $\{j, i-1\} \in P_{k-1}$ for some $j<i-2$;}\smallskip\\
	\big(\theta^{(k-1)}_1,\dots,\theta^{(k-1)}_{i-3}, \theta^{(k-1)}_{i-2}\big) & \text{otherwise}
	\end{cases}
\end{align}
where ${\bf e}_j$ denotes the vector $(0,\dots, 1,\dots, 0)$ whose $j^{\text{th}}$ entry is 1, and 0 otherwise.
Observe that the last entry of $\theta^{(k)}$ is either $-1$ or $0$, and that according to~\eqref{c6} we have
\begin{align}\label{sum:ell}
	\ell_1 + \cdots + \ell_k = \#\{ j: \ i-1 \le j \le I-1 \text{ and } \tau(s_j) = \partial \}.
\end{align}
Our next step will be to detail the case $k=1$ in~\eqref{c6}. That is we claim that there exists $C<\infty$ depending only on $H$ such that
\begin{align}\label{claim}
	U(I_0, P_0, J_0^*, \theta^{(0)}) \le C^q T^{2-\ell_1} \,  U(I-3,P_1, J^*_1, \theta^{(1)}).
\end{align}
Before that we will explicit our methodology on an example.

\smallskip

\noindent
{\it Step 5: Example of integration.}
In order to make our integration procedure more concrete, let us see what we obtain following Example \ref{ex:I:bd}. In this case, recall that we consider the expression \eqref{U:ex}, and Step 4 consists in bounding the integral with respect to $s_4$. There is no power term in $s_4$ in \eqref{U:ex}, which means that we are in the context of inequality \eqref{c5}. We get
\begin{align}\begin{split}\label{U:step5}
&U\left( I=4, P = \big[\{1,3\}\big], J^* = \{2,4\}, \theta = (0,0,0,0) \right)\\
& \lesssim \int_{0<s_1<s_2<s_3<T} \exp\left( -K^4 (s_2-s_1)^{2H} \right) (s_3-s_1)^{2H-2} ds_1 ds_2 ds_3,
\end{split}\end{align}
which corresponds in \eqref{para:k=0} to
\begin{align*}
	I_0 = 3, \quad P_0 = \big[ \{1,3\} \big], \quad J_0^* = \{2\}, \quad \theta^{(0)} = (0,0,0).
\end{align*}
Now, anticipating on Step 6, observe that we just have $\nu=1$ iteration left. We consider $i=I_{k-1}$ for $k=1$, that is $i=3$. Moreover, looking at the right hand side of \eqref{U:step5}, the most singular term is $(s_3-s_1)^{2H-2}$. However, this term is neutralized by the fact that we first integrate in $s_3 \in (s_2,T)$. This yields
\begin{align*}
	&\int_{0<s_1<s_2<s_3<T} \exp\left( -K^4 (s_2-s_1)^{2H} \right) (s_3-s_1)^{2H-2} ds_1 ds_2 ds_3\\
	&\lesssim \int_{0<s_1<s_2<T} \exp\left( -K^4(s_2-s_1)^{2H} \right) (s_2-s_1)^{2H-1} ds_1 ds_2\\
	&\le \int_{0<s_1<s_2<T} (s_2-s_1)^{2H-1} ds_1 ds_2
	\lesssim T^{1+2H} \le T^2.
\end{align*}
This removal of singularities owing to the fact that singular variables are not contiguous in the simplex will be ubiquitous in the sequel. The current case corresponds to the estimate~\eqref{g} below (see also \eqref{c2.1}).

\noindent
{\it Step 6: Analyzing \eqref{claim}.}
In order to prove \eqref{claim}, we reduce the number of cases similarly to what we did for the $s_{I}$ variable. Specifically we
recall property~\ref{it:sigma-i} and the type $\tau(s_i)$ of variables defined above, and use property~\ref{it:sigma-ii} to deduce that
there are only three possibilities for the types of two consecutive variables $s_{i-1}$ and $s_i$:
\begin{enumerate}[label=\textbf{(\arabic*)}]
\item 
$\tau(s_{i-1})=\partial$, $\tau(s_i)=0$;
\item
$\tau(s_{i-1})=0$, $\tau(s_i)=\partial$;
\item
$\tau(s_{i-1})=0$, $\tau(s_i)=0$,
\end{enumerate}
where we note that the case $\tau(s_{i-1})=\partial$, $\tau(s_i)=\partial$ is ruled out by property~\ref{it:sigma-ii}. We now further analyze those 3 cases separately.

\smallskip

\noindent
{\bf Case (1)}: $\tau(s_{i-1})=\partial$, $\tau(s_i)=0$. In this case, $\ell_1 = 1$.
There are two subcases (here recall that $P_{0}$ is defined by~\eqref{para:k=0}):
\begin{enumerate}
\item[(1a)] $i-1 \in B$ for some $B \in P_0$ with $\# B = 1$;
\item[(1b)] $i-1 \in B$ for some $B \in P_0$ with $\# B = 2$.
\end{enumerate}
First consider case (1a).
Thanks to property~\ref{it:sigma-iv} above, at least one of $i, i-1$ belongs to $J^*$.
If $i \in J^*$, the factor $e^{-K^q(s_i-s_{i-1})^{2H}}$ appears in $U(I_0, P_0, J_0^*, \theta^{(0)})$ but the factor $e^{-K^q(s_{i-1}-s_{i-2})^{2H}}$ may or may not appear (depending on whether or not $i-1 \in J^*$). In any case, the latter is bounded by 1, hence the following integral appears in $U(I_0, P_0, J_0^*, \theta^{(0)})$ and is bounded by
\begin{align}\label{c61}
	&\int_{s_{i-2}}^T ds_{i-1} \int_{s_{i-1}}^T ds_i \, |s_{i-1}-s_{i-1+\theta_{i-1}}|^{2H-1} e^{-K^q(s_i-s_{i-1})^{2H}} \notag\\
	&\le\int_{s_{i-2}}^T ds_{i-1} \int_{s_{i-1}}^T ds_i \, \left[ (s_i-s_{i-1})^{2H-1} + (s_{i-1}-s_{i-2})^{2H-1} \right] e^{-K^q(s_i-s_{i-1})^{2H}} \notag\\
	& \le \int_0^T ds_{i-1}  \int_0^\infty ds_i\, s_i^{2H-1} e^{-K^qs_i^{2H}} + \int_0^T ds_{i-1} \, s_{i-1}^{2H-1} \int_0^\infty ds_i \, e^{-K^qs_i^{2H}} \notag\\
	& \le C^q (T + T^{2H})
	\lesssim C^q T = C^q T^{2-\ell_1} 
\end{align}
for some $C<\infty$ depending only on $H$, under the conditions that $H \in (0,1/2)$ and $T\ge 1$.
Similarly, if $i-1 \in J^*$, then the following integral appears and is bounded by
\begin{align}
	&\int_{s_{i-2}}^T ds_{i-1} \int_{s_{i-1}}^T ds_i \, |s_{i-1}-s_{i-1+\theta_{i-1}}|^{2H-1} e^{-K^q(s_{i-1}-s_{i-2})^{2H}} \notag\\
	&\le \int_{s_{i-2}}^T ds_{i-1} \int_{s_{i-1}}^T ds_i \, \left[ (s_i-s_{i-1})^{2H-1} + (s_{i-1}-s_{i-2})^{2H-1} \right] e^{-K^q(s_{i-1}-s_{i-2})^{2H}} \notag\\
	& \le \int_0^\infty ds_{i-1} \,e^{-K^qs_{i-1}^{2H}} \int_0^T ds_i\, s_i^{2H-1} + \int_0^\infty ds_{i-1} \, s_{i-1}^{2H-1} e^{-K^qs_{i-1}^{2H}} \int_0^T ds_i \notag\\
	& \le C^q (T^{2H} + T)
	\lesssim C^q T = C^q T^{2-\ell_1}. \label{c7}
\end{align}
For case (1b), we need to consider $B$ of the form $B = \{j, i-1\}$ with $j<i-1$, and an integral of the form
\begin{align}\label{J:1b}
	\mathcal{J}_{\text{1b}}
	= \int_{s_{i-2}}^T ds_{i-1} \int_{s_{i-1}}^T ds_i \, (s_{i-1}-s_j)^{2H-2}.
\end{align}
Along the same lines as for \eqref{c2.1}, the singularity on the diagonal is avoided in \eqref{J:1b} thanks to the fact that we actually have $j<i-2$. Hence, if we trivially bound the $s_i$-integral by $T$, we get
\begin{align*}
	\mathcal{J}_{\text{1b}} \lesssim T (s_{i-2}-s_j)^{2H-1} \le T (s_{j+1} - s_{j})^{2H-1}
	= T^{2-\ell_1} |s_j-s_{j+\theta_j^{(1)}}|^{2H-1}.
\end{align*}
The appearance of the last factor $|s_j-s_{j+\theta_j^{(1)}}|^{2H-1}$ justifies the update of $\theta$ in \eqref{theta:update}.

\smallskip

\noindent
{\bf Case (2)}: $\tau(s_{i-1})=0$, $\tau(s_i)=\partial$. 
Again, $\ell_1 = 1$. Similarly to the first case, we split the study into two subcases:
\begin{enumerate}
\item[(2a)] $i \in B$ for some $B \in P_0$ with $\# B = 1$;
\item[(2b)] $i \in B$ for some $B \in P_0$ with $\# B = 2$.
\end{enumerate}
In addition, we first assume that the following condition is met:
\begin{align}\label{i-2:+}
\{i-2\} \in P_0,\quad \theta^{(0)}_{i-2}=1.
\end{align}
Under this condition, the factor $(s_{i-1} - s_{i-2})^{2H-1}$ appears in $U(I_0,P_0,J_0^*,\theta^{(0)})$.
Recall that $i = I-1$ in the first iteration ($k=1$) and $\theta^{(0)}_i = \theta^{(0)}_{I-1}$ is either $-1$ or $0$; see \eqref{para:k=0} above.
Then for case (2a), we have $\theta_i^{(0)} = -1$.
If $i \in J^*$, then the following integral appears and is bounded (along the same lines as~\eqref{c7}) by
\begin{align*}
	&\int_{s_{i-2}}^T ds_{i-1} \int_{s_{i-1}}^T ds_i\, (s_{i-1}-s_{i-2})^{2H-1}(s_i-s_{i-1})^{2H-1} e^{-K^q(s_{i}-s_{i-1})^{2H}}\\
	&\le C^q\int_{s_{i-2}}^T ds_{i-1}\,  (s_{i-1}-s_{i-2})^{2H-1}
	\lesssim C^q T^{2H} \le C^q T = C^q T^{2-\ell_1};
\end{align*}
Similarly, if $i-1 \in J^*$, then we have to bound an integral of the form
\begin{align*}
	&\int_{s_{i-2}}^T ds_{i-1} \int_{s_{i-1}}^T ds_i\, (s_{i-1}-s_{i-2})^{2H-1}(s_i-s_{i-1})^{2H-1} e^{-K^q(s_{i-1}-s_{i-2})^{2H}}\\
	&\lesssim T^{2H} \int_{s_{i-2}}^T ds_{i-1} \, (s_{i-1}-s_{i-2})^{2H-1} e^{-K^q(s_{i-1}-s_{i-2})^{2H}}
	 \le C^q T^{2H} \le C^q T = C^q T^{2-\ell_1}.
\end{align*}
The removal of the factor $(s_{i-1} - s_{i-2})^{2H-1}$ justifies the update of $\theta$ in \eqref{theta:update}.


Let us now turn to case (2b) above. We assume  that $B=\{j, i\}$ with $j<i$. This hypothesis and condition \eqref{i-2:+} together imply that $j<i-2$.
In this case, we may bound any possible exponential factors by 1 and estimate the integral as follows:
\begin{align}\label{c8}
	&\int_{s_{i-2}}^T ds_{i-1} \int_{s_{i-1}}^T ds_i\, (s_{i-1}-s_{i-2})^{2H-1}(s_i-s_{j})^{2H-2}
\notag\\
	&\lesssim \int_{s_{i-2}}^T ds_{i-1} \, (s_{i-1}-s_{i-2})^{2H-1}(s_{i-1}-s_{j})^{2H-1}
	\le \int_{s_{i-2}}^T ds_{i-1} \, (s_{i-1}-s_{i-2})^{2H-1}(s_{j+1}-s_{j})^{2H-1}
\notag\\
	&\lesssim T^{2H} (s_{j+1}-s_j)^{2H-1}
	 \le T (s_{j+1}-s_j)^{2H-1} = T^{2-\ell_1} |s_j - s_{j +\theta_j^{(1)}}|^{2H-1}.
\end{align}
Again, the appearance of the last factor $|s_j - s_{j +\theta_j^{(1)}}|^{2H-1}$ is consistent with the update of $\theta$ in \eqref{theta:update}.

So far we have examined cases (2a)-(2b) assuming~\eqref{i-2:+} holds true. Next, suppose~\eqref{i-2:+} is false.
Then under case (2a), we have again $\theta^{(0)}_i = -1$.
If $i \in J^*$, then similarly to~\eqref{c61} we have to handle an integral of the form
\begin{align*}
	\int_{s_{i-2}}^T ds_{i-1} \int_{s_{i-1}}^T ds_i\, (s_i-s_{i-1})^{2H-1} e^{-K^q(s_i-s_{i-1})^{2H}} \le \int_{s_{i-2}}^T C^q ds_{i-1} \le C^q T = C^q T^{2-\ell_1};
\end{align*}
In addition, if $i \not\in J^*$ then we must have $i-1 \in J^*$. Hence we can proceed as in~\eqref{c7} with an integral which is written as:
\begin{align*}
	\int_{s_{i-2}}^T ds_{i-1} \int_{s_{i-1}}^T ds_i\, (s_i-s_{i-1})^{2H-1} e^{-K^q(s_{i-1}-s_{i-2})^{2H}}
	\le C^q T^{2H}\le C^q T = C^q T^{2-\ell_1}.
\end{align*}
We now deal with the case (2b) when ~\eqref{i-2:+} is false. In this situation we have $B=\{j,i \}$ with $j<i$.
If $j < i-2$, then by bounding any possible exponential factors by 1, the following integral appears and is bounded exactly as in~\eqref{c8}:
\begin{align}\begin{split}\label{c8}
	&\int_{s_{i-2}}^T ds_{i-1} \int_{s_{i-1}}^T ds_i \, (s_i-s_j)^{2H-2}	
	\lesssim \int_{s_{i-2}}^T ds_{i-1}\, (s_{i-1}-s_j)^{2H-1}\\
	& \le \int_{s_{i-2}}^T ds_{i-1}\, (s_{j+1}-s_j)^{2H-1} \le T(s_{j+1}-s_j)^{2H-1} = T^{2-\ell_1} |s_j - s_{j+\theta^{(1)}_j}|^{2H-1};
\end{split}\end{align}
When condition~\eqref{i-2:+} is not fulfilled, we cannot rule out the case $j = i-2$. However, if we have $j = i-2$, we can still avoid bad singularities on the diagonal. Specifically, the following integral appears and is bounded by

\begin{equation}\label{g}
\int_{s_{i-2}}^T ds_{i-1} \int_{s_{i-1}}^T ds_i\, (s_i-s_{i-2})^{2H-2}
	\lesssim \int_{s_{i-2}}^T ds_{i-1}\, (s_{i-1}-s_{i-2})^{2H-1} \lesssim T^{2H} \le T = T^{2-\ell_1}.
\end{equation}

\smallskip

\noindent
{\bf Case (3)}: $\tau(s_{i-1})=0$, $\tau(s_i)=0$. In this case, the quantity $\ell_{1}$ in~\eqref{c6} is such that $\ell_1 = 0$.
If \eqref{i-2:+} holds, then by bounding any possible exponential factor by 1, we have
\begin{align}\label{f1}
	\int_{s_{i-2}}^T ds_{i-1} \int_{s_{i-1}}^T ds_i \, (s_{i-1}-s_{i-2})^{2H-1} \lesssim T^{2H+1} \lesssim T^2 = T^{2-\ell_1}.
\end{align}
If \eqref{i-2:+} is false, notice that this corresponds to a trivial case with no differentiation.
The corresponding bound is of the form
\begin{align}\label{f2}
	\int_{s_{i-2}}^T ds_{i-1} \int_{s_{i-1}}^T ds_i \le T^2 = T^{2-\ell_1}.
\end{align}
Summarizing this step, our inequalities \eqref{c61}--\eqref{f2} give a bound on all the possible factors allowing to go from $U(I_0,P_0,J_0^*,\theta^{(0)})$ to $U(I-3,P_1,J_1^*,\theta^{(1)})$, with $\theta^{(1)}$ obeying the rule \eqref{theta:update}.
In other words, we have proved \eqref{claim}.

\smallskip

\noindent
\textit{Step 7: General iteration step.}
We now wish to extend our Step 6, which established a relation between $U(I-1,P_0,J_0^*,\theta^{(0)})$ and $U(I-3,P_1,J_1^*,\theta^{(1)})$, to the generic $k$-th integration over the simplex.
Recalling the notation \eqref{c6}, one can repeat the same computations as in {\bf Cases (1)--(3)} above, except for the fact that we are now integrating the variables $s_{I_{k-1}-1}, s_{I_{k-1}}$. We let the reader check that for $k=\{2,\dots,\nu-1\}$ we obtain
\begin{align}\label{iteration}
	U(I_{k-1}, P_{k-1}, J^*_{k-1}, \theta^{(k-1)}) \le C^q T^{2-\ell_k} \,  U(I_k,P_k, J^*_k, \theta^{(k)}).
\end{align}

\smallskip

\noindent
\textit{Step 8: Conclusion.}
Putting together \eqref{U:I_0} and \eqref{claim}, we have obtained
\begin{align}\begin{split}\label{h}
	U(I,P,J^*,\theta) & \le C^q U(I_0, P_0, J_0^*, \theta^{(0)})\\
	&\le C^{2q} T^{2-\ell_1} U(I_1,P_1,J_1^*,\theta^{(1)}).
\end{split}\end{align}
We can now proceed to apply the general case \eqref{iteration}, separating two cases.

\begin{enumerate}[label=\textbf{(\roman*)}]
\item
If $I$ is odd, that is, if $I_0$ is even. In that case we simply use \eqref{iteration} iteratively starting from the right hand side of \eqref{h}.
We end up with
\begin{align}\begin{split}\label{i}
	U(I,P,J^*,\theta)
	&\le C^{2q} T^{2-\ell_1} U(I_1,P_1,J^*_1,\theta^{(1)})\\
	& \,\,\,\vdots\\
	& \le C^{q\nu} T^{2(\nu-1) - (\ell_1+\cdots+\ell_{\nu-1})} U(2, P_{\nu-1}, J^*_{\nu-1},\theta^{(\nu-1)})\\
	& \le C^{q(\nu+1)} T^{2\nu - (\ell_1+\cdots+\ell_\nu)}.\end{split}\end{align}
Recall that $T \ge 1$.
Also, note that $2\nu = I-1$, and observe from \eqref{sum:ell} and property \ref{it:sigma-ii} above that
\begin{align*}
	\ell_1+\cdots+\ell_\nu = \#\{ j \le I-1: \, \tau(s_j) = \partial \}
	= \#\{ j \le I: \, \tau(s_j) = \partial \} =\ell.
\end{align*}
Plugging this information into \eqref{i}, it follows that
\begin{align*}
	U(I,P,J^*,\theta)
	\le C^{qI} T^{I-1 - \ell} \le C^{q^2} T^{I-\ell}.
\end{align*}
\item
If $I$ is even, that is, if $I_0$ is odd. In this situation we can still start from \eqref{h} and invoke~\eqref{iteration} successively.
Similarly to \eqref{i}, we get
\begin{align}\begin{split}\label{k}
	U(I,P,J^*,\theta)
	&\le C^{2q} T^{2-\ell_1} U(I_1,P_1,J^*_1,\theta^{(1)})\\
	& \,\,\,\vdots\\
	& \le C^{q\nu} T^{2(\nu-1) - (\ell_1+\cdots+\ell_{\nu-1})} U(3, P_{\nu-1}, J^*_{\nu-1},\theta^{(\nu-1)})\\
	& \le C^{q(\nu+1)} T^{2\nu - (\ell_1+\cdots+\ell_\nu)} U(1,P_\nu, J_\nu^*, \theta^{(\nu)}).
\end{split}\end{align}
The slight difference here is that we still have to evaluate $U(1,P_\nu,J_\nu^*,\theta^{(\nu)})$ above.
Specifically, owing to the definition \eqref{nu} of $\nu$, we still have one variable to integrate in $U(1,P_\nu,J_\nu^*,\theta^{(\nu)})$.
On the other hand, all singularities have been removed thanks to the repeated applications of \eqref{iteration}.
We thus discover that
\begin{align}\label{l}
	U(1,P_\nu, J_\nu^*, \theta^{(\nu)}) \le \int_0^T 1 \, ds \le T.
\end{align}
Furthermore, observe that
\begin{align}\label{m}
	\ell_1+\cdots+\ell_\nu = \#\{ j: \ 2 \le j \le I, \tau(s_j) = \partial \} = 	\begin{cases}
		\ell & \text{if $\tau(s_1) = 0$;}\\
		\ell-1 & \text{if $\tau(s_1) = \partial$,}
	\end{cases}
\end{align}
and $2\nu = I-2$.
Hence gathering \eqref{l} and \eqref{m} into \eqref{k}, it follows that
\begin{align}\label{n}
	U(I,P,J^*,\theta)
	\le C^{q(\nu+2)} T^{I-1 - (\ell_1+\cdots+\ell_\nu)} \le C^{q^2} T^{I-\ell}.
\end{align}
\end{enumerate}
Now, we may combine the odd case (i) and the even case (ii) to obtain \eqref{U:UB}.
Then, putting~\eqref{U:UB} back into~\eqref{I:UB} yields $|\mathcal{I}[\sigma; \varepsilon, G_\varepsilon]| \le C^{q^2} T^{2q-m-\ell}$.
Finally, recall the relation $m+\ell=q$ (see \eqref{m+l=q} above) to finish the proof.
\end{proof}

We now gather everything to prove the main result.

\begin{proof}[Proof of Theorem \ref{th:main}]
This result has been proved for $H=1/2$ by \cite{FS18} and for $H \in (1/2,1)$ by \cite{LL25}.
It remains to prove it for $H\in (0,1/2)$.
We will prove it by verifying the hypothesis of Proposition \ref{pr:Fdim}.
Define $\nu$ as the image measure $\nu(A) = m\left\{ t \in [0,1]: B_t \in A \right\}$ as alluded to in Section \ref{s:reduction}.
It is known \cite{K85} that
\begin{align}\label{E:vertical}
	\E\left[ |\hat{\nu}(\xi_2)|^{2q} \right] \lesssim |\xi_2|^{-q/H} \quad \text{for all $\xi_2 \ne 0$ and $q \in \N_+$.}
\end{align}
We thus focus on the horizontal bounds of Proposition \ref{pr:Fdim}.
To this aim, for $H \in (0,1/2)$, we may apply Lemma \ref{lem:I:UB} to \eqref{I[eps,G]} and put it into \eqref{2q:moment} to deduce that there exists $C_q<\infty$ such that
\begin{align*}
	\E\left[ |\hat{\mu}_G(\xi_1,\xi_2)|^{2q} \right]
	= \frac{(q!)^2}{|\xi_2|^{2q/H}} \sum_{\varepsilon\in \mathcal{A}_{2q}} \left( \frac{3}{2\pi \lambda} \right)^q C^{q^2} T^q
	\le \frac{C_qT^q}{|\xi_2|^{2q/H} |\lambda|^q}
\end{align*}
for all $q\in \N_+$, $\xi_1\ne 0$ and $\xi_2 \ne 0$. Recall \eqref{lambda,T} for the values of $\lambda$ and $T$, and use this to obtain from the above that
\begin{align}\label{E:horizontal}
	\E\left[ |\hat{\mu}_G(\xi_1,\xi_2)|^{2q} \right] \le C_q|\xi_1|^{-q} \quad \text{for all $\xi_1\ne 0$ and $q \in \N_+$.}
\end{align}
Hence, the estimates \eqref{E:vertical} and \eqref{E:horizontal} together allow us to apply Proposition \ref{pr:Fdim} and deduce that $\dim_F G(B) \ge \min\{H^{-1}, 1\}$ a.s. Since $H <1/2$, it follows that $\dim_F G(B) \ge 1$. But $B_t$ is continuous, so we know that $\dim_F G(B) \le 1$ (see \cite{FOS14}).
This completes the proof.
\end{proof}

\section*{Acknowledgements}  Cheuk Yin Lee is supported by a research startup fund of the Chinese University of Hong Kong, Shenzhen and the Shenzhen Peacock fund 2025TC0013. Samy Tindel is supported for this work by the NSF grant DMS-2450734.

\Addresses

\end{document}